\newtheorem{theorem}{Theorem} 
\newtheorem*{theorem*}{Theorem} 
\newtheorem*{corollary*}{Corollary} 
\newtheorem*{lemma*}{Lemma}
\newtheorem{lemma}[theorem]{Lemma}
\newtheorem{corollary}[theorem]{Corollary} 
\newtheorem{definition}[theorem]{Definition}
\newtheorem{claim}[theorem]{Claim}
\newtheorem{proposition}[theorem]{Proposition} 
\newtheorem*{proposition*}{Proposition} 
\theoremstyle{remark}
\newcommand{\A}{\mathcal{A}}
\newcommand{\C}{\mathcal{C}}
\newcommand{\F}{\mathcal{F}}
\renewcommand{\P}{\mathcal{P}}
\newcommand{\h}{\mathcal{H}}
\newcommand{\I}{\mathcal{I}}
\newcommand{\Ordo}{\mathcal{O}}
\newcommand{\abs}[1]{\left\lvert{#1}\right\rvert}
\newcommand{\floor}[1]{\left\lfloor{#1}\right\rfloor}
\newcommand{\ceil}[1]{\left\lceil{#1}\right\rceil}
\newcommand{\lubell}{\sum_{A\in\A} \frac{1}{\binom{n}{\abs{A}}}}
\DeclareMathOperator{\La}{La}
\begin{document}
	
\title{An improvement of the general bound on the largest family of subsets avoiding a subposet}

\author{D\'aniel Gr\'osz \footnote{Department of Mathematics, University of Pisa.  e-mail: \href{mailto:groszdani@gmail.com}{groszdanielpub@gmail.com}} \and Abhishek Methuku \footnote{ Department of Mathematics, Central European University, Budapest. e-mail: \href{mailto:abhishekmethuku@gmail.com}{abhishekmethuku@gmail.com}} \and Casey Tompkins \footnote{ Department of Mathematics, Central European University, Budapest. e-mail: \href{mailto:ctompkins496@gmail.com}{ctompkins496@gmail.com}}}

\maketitle
	
	\begin{abstract}
		Let $\La(n,P)$ be the maximum size of a family of subsets of $[n]= \{1,2, \ldots, n \}$ not containing $P$ as a (weak) subposet, and let $h(P)$ be the length of a longest chain in $P$. The best known upper bound for $\La(n,P)$ in terms of $\abs{P}$ and $h(P)$ is due to Chen and Li, who showed that $\La(n,P) \le \frac{1}{m+1} \left(\abs{P} + \frac{1}{2}(m^2 +3m-2)(h(P)-1) -1 \right) {\binom {n} {\lfloor n/2 \rfloor}}$ for any fixed $m \ge 1$. 
		
		In this paper we show that $\La(n,P) \le  \frac{1}{2^{k-1}} \left(\abs P + (3k-5)2^{k-2}(h(P)-1) - 1 \right) {n \choose \left\lfloor n/2\right\rfloor }$ for any fixed $k \ge 2$, improving the best known upper bound. By choosing $k$ appropriately, we obtain that $\La(n,P) = \Ordo\left( h(P) \log_2\left(\frac{\abs{P}}{h(P)}+2\right) \right) {n \choose \floor{n/2} }$ as a corollary, which we show is best possible for general $P$. We also give a different proof of this corollary by using bounds for generalized diamonds.  We also show that the Lubell function of a family of subsets of $[n]$ not containing $P$ as an induced subposet is $\Ordo(n^c)$ for every $c>\frac{1}{2}$.
		
	\end{abstract}
	
	\section{Introduction}
	Let $[n] = \{1,2,\ldots,n\}$ and $2^{[n]}$ be the power set of $[n]$.   For two partially ordered sets (posets), $P$ and $Q$, $P$ is said to be a subposet of $Q$ if there exists an injection $\varphi$ from $P$ into $Q$ so that $x\le y$ in $P$ implies $\varphi(x) \le \varphi(y)$ in $Q$, whereas $P$ is said to be an induced subposet of $Q$ if there exists an injection  $\varphi'$ from $P$ into $Q$ such that $x\le y$ in $P$ if and only if $\varphi'(x) \le \varphi'(y)$ in $Q$. Every family of sets $\A \subset 2^{[n]}$ may be viewed as a poset with respect to inclusion.  
	
	Define $\La(n,P) = \max\{\abs{\A}: \A \subset 2^{[n]} \mbox{ and } P  \mbox{ is not a subposet of } \A\}$ and let $\La^{\#}(n,P) = \max\{\abs{\A}: \A \subset 2^{[n]} \mbox{ and } P  \mbox{ is not an induced subposet of } \A\}$.  The function $\La(n,P)$ has been studied extensively. Such results are all extensions of a famous theorem of Sperner \cite{sperner1928satz} asserting that the size of the largest antichain in $2^{[n]}$ (containment-free family) is $\binom{n}{\floor{n/2}}$.  Erd\H{o}s \cite{erdos1945lemma} extended this result to $k+1$-paths $P_{k+1}$.  A central open problem in this area is to determine the value of $\La(n,D_2)$, where $D_2$, the diamond poset, is defined by $4$ elements $w,x,y,z$ with $w \le x,y \le z$. Posets for which $\La(n,P)$ has been studied include crowns \cite{lu2014crown}, harps \cite{griggs2012diamond}, generalized diamonds \cite{griggs2012diamond}, the butterfly poset \cite{de2005largest}, fans \cite{griggs2012poset}, $V$'s and $\Lambda$'s \cite{katona1983extremal}, the $N$ poset \cite{griggs2008}, forks \cite{krfork}, and recently the complete $3$ level poset $K_{r,s,t}$ \cite{patkos} among many others.  
	
	In another direction,  it is interesting to determine general bounds on $\La(n,P)$ depending on the size of $P$ and the length of the largest chain in $P$, denoted $h(P)$.   
	
	If $T$ is a tree then Bukh \cite{bukh2009set} proved $\La(n,T) \le (h(T)-1) \binom{n}{\floor{n/2}} (1 + \Ordo(\frac1n))$, thereby establishing the asymptotically optimal bound for the case of trees. The first upper bound of $\La(n,P)$ for general posets $P$ in terms of $\abs{P}$ and $h(P)$ is due to Burcsi and Nagy \cite{burcsi2013method}.
	
	\begin{theorem} [Burcsi, Nagy \cite{burcsi2013method}]
		\label{doublechain}
		For any poset $P$, when $n$ is sufficiently large, we have
		\begin{equation}
			\La(n,P) \le \left( \frac{\abs{P}+h(P)}{2} -1 \right) \binom{n}{\floor{\frac{n}{2}}}.
		\end{equation}
	\end{theorem}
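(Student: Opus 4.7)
The plan is to use the \emph{double chain} method. For each permutation $\pi$ of $[n]$, define the double chain
\[
\D(\pi)=\{C_0,C_1,\dots,C_n\}\cup\{C'_1,\dots,C'_{n-1}\},
\]
where $C_i=\{\pi(1),\dots,\pi(i)\}$ and $C'_i=\{\pi(1),\dots,\pi(i-1),\pi(i+1)\}$. Then $\D(\pi)$ is a subposet of $2^{[n]}$ of size $2n$ and width $2$, whose incomparability graph is a caterpillar: a path $C'_1$--$C'_2$--$\cdots$--$C'_{n-1}$, with each unprimed $C_i$ attached as a leaf to $C'_i$.

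The first step is a combinatorial lemma: every $\mathcal{S}\subseteq\D(\pi)$ that avoids $P$ as a weak subposet satisfies $|\mathcal{S}|\le|P|+h(P)-2$. To prove it, assume $|\mathcal{S}|\ge|P|+h(P)-1$ and produce an embedding of $P$ into $\mathcal{S}$. Since $\D(\pi)$ has width $2$, Dilworth's theorem applied to $\mathcal{S}$ yields a decomposition into two chains, and in particular a chain of length at least $\lceil|\mathcal{S}|/2\rceil\ge h(P)$ (using $|P|\ge h(P)$). Embed a longest chain of $P$ into this chain. The remaining $|P|-h(P)$ vertices of $P$ are then assigned to the other elements of $\mathcal{S}$ -- unused chain elements together with the ``side'' elements at the doubled levels. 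The caterpillar structure gives the needed flexibility: each side element $C'_i$ is comparable to every unprimed $C_j$ with $j\ne i$ and to every other $C'_j$ except $C'_{i-1}$ and $C'_{i+1}$, so a greedy extension of the embedding succeeds.

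Granting the lemma, we average over $\pi$. A direct count shows that for a uniformly random permutation $\pi$ and any set $A\subseteq[n]$ with $1\le|A|\le n-1$ one has $\Pr[A\in\D(\pi)]=2/\binom{n}{|A|}$ (the contributions from $C_{|A|}$ and $C'_{|A|}$ each give $1/\binom{n}{|A|}$). Taking expectations,
\[
|P|+h(P)-2\;\ge\;\mathbb{E}\bigl[|\A\cap\D(\pi)|\bigr]\;\ge\;\frac{2(|\A|-2)}{\binom{n}{\lfloor n/2\rfloor}},
\]
where the $-2$ accounts for $\emptyset$ and $[n]$, which belong to every double chain. Rearranging yields the claimed bound once the additive constant is absorbed by the main term, which happens for $n$ sufficiently large.

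The main obstacle is the combinatorial lemma: width $2$ alone does not suffice -- for example, two disjoint $2$-chains form a width-$2$ poset of size $4$ containing no $V$-poset as a weak subposet, although $|V|+h(V)-1=4$. The specific caterpillar-shaped incomparability graph of $\D(\pi)$ must be genuinely exploited throughout the embedding, and carrying out this extension argument carefully is the delicate part.
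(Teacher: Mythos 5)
Your overall skeleton is the right one, and it is in fact how the paper obtains this bound: your $\D(\pi)$ is exactly the $k=2$ case of the paper's $k$-interval chain $\C_k^0$ (with $C'_{i+1}=[i]\cup\{i+2\}$ playing the role of the sets $A_i\cup(A_{i+2}\setminus A_{i+1})$), and the averaging over $\pi$ is exactly the Lubell double count in Lemma~\ref{bound}. Your description of the incomparability graph as a caterpillar is also correct, and the computation $\Pr[A\in\D(\pi)]=2/\binom{n}{|A|}$ for $1\le|A|\le n-1$ is right. (The ``$-2$'' for $\emptyset$ and $[n]$ is actually unnecessary slack: since $1\ge 2/\binom{n}{\lfloor n/2\rfloor}$, you still get $\mathbb{E}[|\A\cap\D(\pi)|]\ge 2|\A|/\binom{n}{\lfloor n/2\rfloor}$, which removes the additive constant entirely. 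But this is a minor point.)

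The genuine gap is the combinatorial lemma, and you yourself flag it. ``Take a long chain, embed the longest chain of $P$ into it, then greedily place the rest'' is not an argument; it fails even in very small cases if you are unlucky with the initial choice. Take $P=D_2$ (the diamond, $|P|=4$, $h=3$) and $\mathcal{S}=\{C_1,C'_1,C_2,C'_2,C_3,C'_3\}$, which has size $6=|P|+h-1$. Dilworth hands you a $3$-chain; suppose you take $C'_1\subset C_2\subset C'_3$ and map the chain $w<x<z$ of $P$ onto it. Now the fourth element $y$ needs an image $Y$ with $C'_1\subset Y\subset C'_3$, but none of the remaining sets $C_1,C'_2,C_3$ works ($C'_1\not\subset C'_2$ and $C_3\not\subset C'_3$). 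The embedding does succeed if you instead start from the chain $C_1\subset C_2\subset C_3$, but nothing in ``greedy extension'' tells you to make that choice, and for larger $P$ the number of ways to be unlucky grows. Width $2$ plus ``caterpillar'' is a true and useful structural fact, but turning it into a proof requires deciding the order in which the elements of $P$ are placed and bounding how many sets of $\D(\pi)$ become unusable at each stage.

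The paper resolves exactly this difficulty in Lemma~\ref{maxsubset} (specialized to $k=2$) by a different device: Mirsky's theorem decomposes $P$ into $h$ antichains $\A_1,\dots,\A_h$, and the antichains are embedded top-down into the largest available sets of the interval chain under a carefully chosen total order $\prec_{\h}$ (which breaks ties by putting the one ``bad'' set of each size last). Lemma~\ref{unrelated} and Proposition~\ref{worstset} then show that between consecutive antichain embeddings at most one set (for $k=2$) becomes unusable, giving $\alpha(\C_2,P)\le |P|+(h-1)-1$. That accounting, not a generic greedy step, is the actual content you would need to supply, so the proposal as written does not establish the theorem.
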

	
	In their paper, they introduced a generalization of the chain, called a double chain, and used a Lubell-style double counting argument to deduce the bound.    This object was generalized by Chen and Li \cite{chen2014note} who improved their upper bound to the following:
	
	\begin{theorem} [Chen, Li \cite{chen2014note}]
		\label{chenli}
		For any poset $P$, when $n$ is sufficiently large, the inequality
		\begin{equation}
			\label{eq:genChenLi}
			\La(n,P) \le \frac{1}{m+1} \left(\abs{P} + \frac{1}{2}(m^2 +3m-2)(h(P)-1) -1 \right) {\binom {n} {\floor{\frac{n}{2}}}}
		\end{equation}
		holds for any fixed $m \ge 1$.
	\end{theorem}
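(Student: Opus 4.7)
The plan is to generalize the double chain method of Burcsi-Nagy by constructing, for each maximal chain $\pi\colon\emptyset=A_0\subset A_1\subset\cdots\subset A_n=[n]$ of $2^{[n]}$, an $(m+1)$-fold analogue $\G_m(\pi)\subset 2^{[n]}$ of the Burcsi-Nagy double chain. The structure $\G_m(\pi)$ should consist of $\pi$ together with $m$ additional chains obtained by small local modifications of $\pi$ (say, swapping the element added at some early step with another element, producing a parallel chain that agrees with $\pi$ outside a short range), chosen so that each fixed set $A\subset[n]$ lies in $\G_m(\pi)$ for exactly $(m+1)\,\abs{A}!\,(n-\abs{A})!$ maximal chains $\pi$. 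The requirement that there is room for the needed swaps accounts for the ``$n$ sufficiently large'' hypothesis.

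First I would establish a generalized Lubell identity by double counting pairs $(A,\pi)$ with $A\in\A\cap\G_m(\pi)$, giving
\begin{equation*}
(m+1)\sum_{A\in\A}\frac{1}{\binom{n}{\abs{A}}} \;=\; \frac{1}{n!}\sum_{\pi}\abs{\A\cap\G_m(\pi)}.
\end{equation*}
Second, I would prove the structural bound
\begin{equation*}
\abs{\A\cap\G_m(\pi)} \;\le\; \abs{P}+\tfrac{1}{2}(m^2+3m-2)(h(P)-1)-1
\end{equation*}
for every $P$-free family $\A$ and every chain $\pi$. Combining these and using $\binom{n}{\abs{A}}\le\binom{n}{\floor{n/2}}$ then yields the theorem.

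The decisive step, and main obstacle, is the structural bound. The idea is that $\G_m(\pi)$ decomposes into $m+1$ sub-chains of length $n+1$ whose rungs at each rank form a prescribed pattern of comparabilities and incomparabilities determined by the swaps. If $\A\cap\G_m(\pi)$ is large, then many ranks contribute several sets of $\A$ simultaneously, and from these one can extract $h(P)$ disjoint long chains inside $\A\cap\G_m(\pi)$ whose union already embeds any poset of height $h(P)$ and size $\abs{P}$. Bounding how many ``extra'' rungs beyond the baseline $\abs{P}-1$ can accumulate within the $h(P)-1$ non-top ranks without forcing such an embedding gives the coefficient $\tfrac{1}{2}(m^2+3m-2)$; the quadratic dependence on $m$ arises because each of the $\binom{m+1}{2}$ pairs of parallel sub-chains can contribute an additional rung at each level before the structure becomes dense enough to embed $P$. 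Setting $m=1$ should recover Theorem~\ref{doublechain}, while optimizing $m$ in terms of $\abs{P}$ and $h(P)$ produces the claimed improvement.
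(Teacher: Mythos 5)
The paper does not actually prove this theorem; it cites it from Chen and Li and only recovers the cases $m=1$ and $m=3$ as special instances of the interval-chain bound of Theorem~\ref{mainthm}. Your high-level outline—build, for each maximal chain $\pi$, a chain-like structure $\G_m(\pi)$ with $m+1$ sets on each rank, run a Lubell-type double count over pairs $(A,\pi)$, and reduce everything to a purely combinatorial bound on $P$-free subfamilies of a single $\G_m(\pi)$—is the correct framework and matches both Chen--Li's approach and the paper's own strategy (Lemmas~\ref{l1}, \ref{bound}, \ref{maxsubset}) for its stronger Theorem~\ref{mainthm}.

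The decisive structural bound, however, is where your proposal breaks down. You assert that from a large $\A\cap\G_m(\pi)$ one can extract $h(P)$ \emph{disjoint} long chains "whose union already embeds any poset of height $h(P)$ and size $\abs P$." This is not true: pairwise disjoint chains in such a structure are typically pairwise incomparable, and their union has no cross-relations, so it cannot host even $K_{2,2}$. The actual mechanism—used by Burcsi--Nagy, by Chen--Li, and by Lemma~\ref{maxsubset} of this paper—is Mirsky's theorem: decompose $P$ into $h(P)$ antichains, embed them greedily bottom-up into $\A\cap\G_m(\pi)$, and after placing each antichain discard every set of the structure not below all the newly placed sets. The entire content of the coefficient $\tfrac12(m^2+3m-2)$ is an analogue of Lemma~\ref{unrelated}: at each of the $h(P)-1$ discard steps, one must show that at most $\tfrac12(m^2+3m-2)$ sets of $\G_m(\pi)$ below the current rank can be incomparable to some set at or above it. That requires an explicit definition of $\G_m(\pi)$ (you leave it as "small local modifications of $\pi$") and a genuine case analysis; without it, the quadratic coefficient is unsupported and the argument does not close. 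A smaller but real issue: the "generalized Lubell identity" cannot be an exact equality, since near ranks $0$ and $n$ the structure $\G_m(\pi)$ cannot contain $m+1$ sets of each size; this must be handled, as in the paper's proof of Theorem~\ref{mainthm}, by replacing the identity with an inequality via $\binom{n}{\abs A}\le\binom{n}{\lfloor n/2\rfloor}$ for $n$ large.
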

	
	Putting $m = \ceil{\sqrt{ \frac{\abs{P}}{h(P)}} } $ in the above formula, they obtained 
	\begin{equation}
		\label{eq:OboundLi}
		\La(n,P) = \Ordo(\abs{P}^{1/2}h(P)^{1/2}) \binom{n}{\floor{\frac{n}{2}}}.
	\end{equation}
	
	We further improve Theorem \ref{chenli}, by showing that
	
	\begin{theorem}\label{mainthm}
		For any poset $P$, when $n$ is sufficiently large, the inequality
		\begin{displaymath}
			\La(n,P) \le  \frac{1}{2^{k-1}} \left(\abs P + (3k-5)2^{k-2}(h(P)-1) - 1 \right) {n \choose \floor{\frac{n}{2}} }
		\end{displaymath}
		holds for any fixed $k\geq 2$.
	\end{theorem}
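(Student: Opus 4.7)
The plan is to extend the Lubell-style double counting of Burcsi--Nagy and Chen--Li to a richer ``$k$-chain'' structure. I define a $k$-chain recursively: a $1$-chain is a maximal chain in $2^{[n]}$ (from $\emptyset$ to $[n]$), and a $k$-chain is the union of two $(k-1)$-chains that share a common set (the ``branching point''). Consequently, a $k$-chain is a union of $2^{k-1}$ maximal chains arranged along a binary tree with $k-1$ levels of branching; for $k=2$ this is exactly the double chain of Burcsi and Nagy, and the new theorem should reduce to Theorem \ref{doublechain} in that case (which one checks by plugging in $k=2$).

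The proof splits into two parts. The first is a combinatorial \emph{embedding lemma}: any subfamily $\A' \subseteq \A$ supported on a single $k$-chain $K$ with
\[
|\A'| \ge |P| + (3k-5) 2^{k-2} (h(P)-1)
\]
contains $P$ as a subposet. I would prove this by induction on $k$, with base case $k=2$ given essentially by the Burcsi--Nagy argument on double chains. For the inductive step, view a $k$-chain as two $(k-1)$-chains $K_1, K_2$ glued at a common set $A_0$; if one of $K_1, K_2$ already carries the inductive threshold $|P| + (3k-8) 2^{k-3} (h(P)-1)$ of elements, invoke the inductive hypothesis. Otherwise, both $(k-1)$-chains contain balanced shares, and one stitches partial embeddings of $P$ together through $A_0$, paying an additional $3 \cdot 2^{k-2}(h(P)-1)$ elements to align heights across the branching point. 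Writing $g(k) = (3k-5)2^{k-2}$, this gives the recurrence $g(k) = 2g(k-1) + 3 \cdot 2^{k-2}$ with $g(2)=1$, whose closed form is exactly $(3k-5)2^{k-2}$.

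The second part is a Lubell-type averaging argument. Sample a random $k$-chain by iteratively taking uniform maximal chains through each branching node along the binary tree. By symmetry each of the $2^{k-1}$ constituent maximal chains is marginally uniform, so the expected value of $|\A \cap K|$ equals $2^{k-1} \sum_{A\in\A} 1/\binom{n}{|A|}$ up to an error accounting for sets that are shared between different constituent chains near branching points. This error depends only on $k$ and the branching heights, so for $n$ sufficiently large it is negligible compared to the main term. Combining with the embedding lemma, which forces this expectation to be at most $|P|+(3k-5)2^{k-2}(h(P)-1)-1$ when $\A$ is $P$-free, yields
\[
\sum_{A\in\A} \frac{1}{\binom{n}{|A|}} \le \frac{1}{2^{k-1}}\Bigl(|P| + (3k-5)2^{k-2}(h(P)-1) - 1\Bigr),
\]
and Theorem \ref{mainthm} follows by lower bounding the left-hand side by $|\A|/\binom{n}{\floor{n/2}}$.

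The main obstacle will be the embedding lemma: carefully tracking how a hypothetical image of $P$ can draw elements from each $(k-1)$-subchain versus the ``spine'', and verifying that the cost of stitching at each branching node is precisely $3 \cdot 2^{k-2}(h(P)-1)$. The chain counting and the Lubell averaging then follow the well-established template of Burcsi--Nagy and Chen--Li once the $k$-chain structure is in place, modulo the routine control of boundary contributions near $\emptyset$ and $[n]$ that the hypothesis on $n$ absorbs.
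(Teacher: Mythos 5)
Your proposed $k$-chain is a genuinely different object from the one the paper uses, and it does not work. The paper's structure is a \emph{$k$-interval chain}: fix one maximal chain $A_0\subset A_1\subset\cdots\subset A_n$ and take $\C_k=\bigcup_{i=0}^{n-k}[A_i,A_{i+k}]$, i.e.\ a union of short intervals hugging a single spine. Your recursive ``union of two $(k-1)$-chains sharing a branching point'' is not this; already for $k=2$ it is not the Burcsi--Nagy double chain. The double chain's auxiliary sets $B_i=A_{i-1}\cup(A_{i+1}\setminus A_i)$ do \emph{not} form a chain ($B_i$ and $B_{i+1}$ are incomparable), so the double chain is not a union of two maximal chains. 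More seriously, your embedding lemma is false for your structure. Take $n=6$ and the two maximal chains $\emptyset,\{1\},\{1,2\},\ldots,[6]$ and $\emptyset,\{6\},\{5,6\},\ldots,[6]$: their union is a ``$2$-chain'' in your sense, and in the middle the two chains are pairwise incomparable (one side always contains $1$ and not $6$, the other the reverse). Then $\{\{1\},\{1,2\},\{1,2,3\}\}\cup\{\{6\},\{5,6\},\{4,5,6\}\}$ is a $6$-element $D_2$-free subfamily, exceeding the bound $|D_2|+(3k-5)2^{k-2}(h-1)-1=4+1\cdot 2-1=5$ your lemma would require at $k=2$. The reason the interval chain avoids this is precisely that its sets stay within distance $k$ of the spine, so the number of sets that can become incomparable to a newly chosen set is bounded by $(3k-5)2^{k-2}$ (the paper's Lemma~\ref{unrelated}); a binary tree of disjoint maximal chains has no such local rigidity, and large $P$-free families can be spread across the branches.

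Beyond the structural mismatch, the remaining steps in your plan are not proofs but aspirations. The recurrence $g(k)=2g(k-1)+3\cdot2^{k-2}$ is reverse-engineered from the target constant; no mechanism is given for why ``stitching through $A_0$'' costs exactly $3\cdot2^{k-2}(h-1)$, and the example above shows that for the structure you actually defined there is no valid cost to pay. In the averaging step, the identity $\mathbb{E}|\A\cap K|=\sum_A N_{|A|}/\binom{n}{|A|}$ needs $N_m=2^{k-1}$ uniformly in the middle range (the paper's Lemma~\ref{levelsize}); for a binary tree of chains $N_m$ ramps up from $1$ to $2^{k-1}$ as you cross each branching level, so the ``error'' you wave away is exactly the gap between $\sum_i|\A\cap K_i|$ (which \emph{over}counts on shared prefixes) and $|\A\cap K|$, and controlling it requires pinning down where the branchings occur---something your recursive definition leaves entirely open. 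The paper sidesteps all of this: Lemma~\ref{levelsize} gives the exact level counts, Lemma~\ref{unrelated} bounds the newly-excluded sets when one moves up a level, and Lemma~\ref{maxsubset} turns this into the embedding bound via a layered greedy choice ordered by size (with a careful tiebreak from Proposition~\ref{worstset}); the Lubell step is then the standard permutation double count of Lemma~\ref{bound}. You would need to replace your $k$-chain with the interval chain and supply analogues of these three lemmas to make the argument go through.
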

	
	Notice that putting $k = 2$, we get Theorem \ref{doublechain} and Theorem \ref{chenli} for $m$ = 1. Putting $k =  3$, we get Theorem \ref{chenli} for $m = 3$. For $k > 3$, our result strictly improves Theorem \ref{chenli}.
	
	By choosing $k$ appropriately in our theorem, we obtain the following improvement of \eqref{eq:OboundLi}:
	
	\begin{corollary}\label{maincor}
		For every poset $P$ and sufficiently large $n$,
		\begin{displaymath}
			\La(n,P) = \Ordo\left( h(P) \log_2\left(\frac{\abs{P}}{h(P)}+2\right) \right) {n \choose \left\lfloor \frac{n}{2}\right\rfloor }.
		\end{displaymath}
	\end{corollary}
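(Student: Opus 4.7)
The plan is to apply Theorem~\ref{mainthm} with $k$ chosen to optimize the right-hand side. Rewriting the bound, the coefficient of $\binom{n}{\floor{n/2}}$ in Theorem~\ref{mainthm} is
\[
\frac{\abs P - 1}{2^{k-1}} \;+\; \frac{(3k-5)(h(P)-1)}{2}.
\]
The first summand is geometrically decreasing in $k$, while the second grows only linearly in $k$. Hence one should pick the smallest $k$ that makes the first summand at most a constant multiple of the second; this should yield a bound of order $h(P)$ times the chosen value of $k$.

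Concretely, for the main case $h(P)\ge 2$, I would take
\[
k \;=\; \max\!\left(2,\ \bigl\lceil \log_2\bigl(\tfrac{\abs P-1}{h(P)-1}\bigr)\bigr\rceil + 1\right),
\]
so that $2^{k-1}\ge (\abs P-1)/(h(P)-1)$ and therefore $\frac{\abs P -1}{2^{k-1}} \le h(P)-1$. With this choice $k = \Ordo\bigl(\log_2(\abs P/h(P) + 2)\bigr)$, and the bound from Theorem~\ref{mainthm} becomes
\[
\La(n,P) \;\le\; \left( (h(P)-1) \;+\; \tfrac{(3k-5)(h(P)-1)}{2}\right) \binom{n}{\floor{n/2}} \;=\; \Ordo\bigl(k\cdot h(P)\bigr)\binom{n}{\floor{n/2}},
\]
which is exactly the desired $\Ordo\bigl(h(P)\log_2(\abs P / h(P) + 2)\bigr)\binom{n}{\floor{n/2}}$ bound. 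Since $n$ in Theorem~\ref{mainthm} needs to be large relative to the fixed $k$, and here $k$ is determined by the fixed poset $P$, the implicit hypothesis ``$n$ sufficiently large'' is consistent with the corollary's hypothesis that $n$ is large relative to $P$.

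The edge case $h(P)=1$ (antichains) requires a separate treatment because the expression $(\abs P-1)/(h(P)-1)$ is undefined. Here, however, the second summand in the bound vanishes entirely, so Theorem~\ref{mainthm} reduces to $\La(n,P)\le \frac{\abs P-1}{2^{k-1}}\binom{n}{\floor{n/2}}$; choosing $k$ large enough that $2^{k-1}\ge \abs P-1$ gives $\La(n,P)\le \binom{n}{\floor{n/2}}$, which is absorbed by the $\Ordo\bigl(\log_2(\abs P + 2)\bigr)$ guaranteed by the corollary. Finally, the case $\abs P \le c\cdot h(P)$ for some fixed constant $c$ is handled by simply taking $k=2$ in Theorem~\ref{mainthm}, giving $\Ordo(h(P))$ and matching the corollary since $\log_2(\abs P / h(P) + 2) = \Ordo(1)$ in this regime.

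There is no real obstacle beyond the careful bookkeeping of the above case analysis: the proof is essentially a one-parameter optimization of the bound from Theorem~\ref{mainthm}, and the appearance of ``$+2$'' inside the logarithm in the corollary statement is precisely what makes the formula uniformly valid across all three regimes discussed above.
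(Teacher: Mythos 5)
Your proof is correct and takes essentially the same route as the paper's first proof of this corollary: both choose $k \approx \log_2(\abs P/h(P))$ in Theorem~\ref{mainthm} (the paper uses $k=\lceil\log_2(\abs P/h)\rceil$ and falls back to Erd\H{o}s' chain bound when $k\le 1$, while you use $\lceil\log_2(\tfrac{\abs P-1}{h-1})\rceil+1$ with a separate patch for $h=1$), then observe that the resulting coefficient is $O(kh)$. The paper carries explicit constants through to get $\tfrac{3}{2}h\log_2(\abs P/h)+3.5h$, whereas you stop at the big-$O$ level, but the optimization is the same.
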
 
	The following proposition shows that this bound cannot be improved for general $P$.
	\begin{proposition} \label{lowerbound}
		For $P=K_{a,a,\ldots,a}$, we have
		\[ \La(n,P) \ge \left( (h(P)-2)\log_2 a \right) {n \choose \left\lfloor \frac{n}{2}\right\rfloor } =  \left( (h(P)-2)\log_2 \left (\frac{\abs{P}}{h(P)} \right ) \right) {n \choose \left\lfloor \frac{n}{2}\right\rfloor }. \]
	\end{proposition}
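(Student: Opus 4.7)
The plan is to take the family $\F$ consisting of all subsets of $[n]$ whose size lies in the middle $m := (h(P)-2)\log_2 a$ consecutive levels of $2^{[n]}$, centred at $\lfloor n/2\rfloor$. Since for $n$ large the binomial coefficients in such a short window are all $(1-o(1))\binom{n}{\lfloor n/2\rfloor}$, we have $|\F| \ge (h(P)-2)(\log_2 a)\binom{n}{\lfloor n/2\rfloor}$ once $n$ is sufficiently large; what remains is to show that $\F$ does not contain $K_{a,a,\ldots,a}$ as a (weak) subposet.

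The heart of the proof is the following lemma: any embedding of $K_{a,a,\ldots,a}$ with $t$ parts into the Boolean lattice $\B_r$ of rank $r$ forces $r \ge t\log_2 a$. I would prove this by induction on $t$. The base case $t=1$ says only that $\B_r$ contains an antichain of size $a$, so by Sperner's theorem $a \le \binom{r}{\lfloor r/2\rfloor} \le 2^r$ and hence $r \ge \log_2 a$. For the inductive step, suppose $A_1<A_2<\cdots<A_t$ are antichains of size $a$ each realising the embedding in $\B_r$, and set $X := \bigcup A_1$ and $Y := \bigcap A_t$; since every element of $A_1$ is below every element of $A_t$, $X \subseteq Y$. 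The antichain $A_1$ lives inside $2^X$ and the antichain $\{C \setminus Y : C \in A_t\}$ lives inside $2^{[r]\setminus Y}$, so the base case yields $|X| \ge \log_2 a$ and $r-|Y| \ge \log_2 a$. Meanwhile $A_2,\ldots,A_{t-1}$ realise $K_{a,\ldots,a}$ with $t-2$ parts inside $[X,Y]\cong \B_{|Y|-|X|}$, so the inductive hypothesis gives $|Y|-|X| \ge (t-2)\log_2 a$. Adding the three pieces yields $r \ge t\log_2 a$.

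To finish the proposition, suppose for contradiction that $K_{a,\ldots,a}$ with $h(P)$ parts embeds in $\F$ via antichains $A_1<\cdots<A_{h(P)}$, and set $X := \bigcup A_1$ and $Y := \bigcap A_{h(P)}$. Since all elements of $\F$ lie in only $m$ consecutive levels, and since $|X|\ge\max_{B\in A_1}|B|$ while $|Y|\le\min_{C\in A_{h(P)}}|C|$, one has $|Y|-|X|\le m-1$. But $A_2,\ldots,A_{h(P)-1}$ realise $K_{a,\ldots,a}$ with $h(P)-2$ parts inside $[X,Y]\cong\B_{|Y|-|X|}$, so the lemma forces $|Y|-|X|\ge(h(P)-2)\log_2 a = m$, contradicting $|Y|-|X|\le m-1$.

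I expect the inductive lemma to be the conceptual heart of the argument: the key observation is that fixing $X=\bigcup A_1$ and $Y=\bigcap A_t$ consumes two rank-$\log_2 a$ chunks at the bottom and top of $\B_r$, leaving a strictly smaller instance of the same problem inside $[X,Y]$. A technical wrinkle is that $\log_2 a$ need not be an integer, so one should really take $\lfloor m\rfloor$ middle levels and use $\lceil\log_2 a\rceil$ in the lemma; the resulting rounding losses sit below the leading term and are absorbed into the asymptotic statement.
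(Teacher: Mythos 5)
Your proof is correct and is morally the same argument as the paper's, though packaged differently. The paper argues iteratively: it tracks the unions $U_i = \bigcup\P_i$, observes that all $a$ sets of level $i+1$ lie in the interval $[U_i, U_{i+1}]$ so that $|U_{i+1}| - |U_i| \ge \log_2 a$, sums over $i=1,\dots,h-2$, and concludes that any family realizing $K_{a,\dots,a}$ spans at least $(h-2)\log_2 a + 1$ levels. You instead isolate an inductive lemma bounding the rank $r$ of a Boolean lattice containing $K_{a,\dots,a}$ with $t$ parts by $r \ge t\log_2 a$, peeling off $\bigcup A_1$ from the bottom and $\bigcap A_t$ from the top at each step, and then apply the lemma to the interval $\bigl[\bigcup A_1,\ \bigcap A_{h}\bigr]$. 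Unrolling your induction performs essentially the paper's $U_i$-counting (worked symmetrically from both ends), so the key estimate $\log_2 a$ ranks per level is the same; your version has the modest bonus of proving the cleaner self-contained statement about $\B_r$. One small remark: in the base case you invoke Sperner's theorem, but the crude count $a \le 2^{|X|}$ already suffices there (and is what you actually use in the inductive step), so Sperner can be dropped. You correctly flag the rounding issue (the family spans an integer number of levels, so one should really use $\lceil\log_2 a\rceil$, and the $m$ middle levels have total size slightly below $m\binom{n}{\lfloor n/2\rfloor}$); the paper's statement suffers the same infelicity and likewise only holds in the obvious asymptotic sense.
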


	It is interesting to note that much less is known about the induced version. The only known general bound on $\La^{\#}(n,P)$ has a much weaker constant than for the non-induced problem due to its dependence on the constant term of the higher dimensional variant of the Marcus-Tardos theorem \cite{marcus2004,klazar2007}.
	
	\begin{theorem} [Methuku, P\'alv\"olgyi \cite{methukup}] \label{inducedLa}
		For every poset $P$, there is a constant $C$ such that the size of any family of subsets of $[n]$ that does not contain an induced copy of $P$ is at most $C\binom{n}{\left\lfloor \frac{n}{2}\right\rfloor}$.
	\end{theorem}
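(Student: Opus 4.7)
The plan is to reduce the induced-$P$-free problem to a forbidden-subtensor problem and invoke the higher-dimensional Marcus--Tardos theorem of Klazar \cite{klazar2007} (a generalization of \cite{marcus2004}), which states that for any fixed $d$-dimensional $\{0,1\}$-pattern $M$, every $d$-dimensional $\{0,1\}$-array of side $N$ avoiding $M$ has at most $c_M N^{d-1}$ ones. The overall strategy is: assume for contradiction that $\abs{\mathcal{F}} \ge C\binom{n}{\lfloor n/2\rfloor}$ for a suitably large $C = C(P)$, and produce a ``block-rigid'' configuration inside $\mathcal{F}$ forcing an induced copy of $P$.

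First, reduce to families supported on levels within an $O(\sqrt{n})$-window around $n/2$, losing only a constant factor in $\abs{\mathcal{F}}$. Set $k = \abs{P}$ and partition $[n]$ into $k$ blocks $B_1,\ldots,B_k$ of nearly equal size $n/k$. Associate with each $A\in\mathcal{F}$ the profile vector $\sigma(A) = (\abs{A\cap B_1},\ldots,\abs{A\cap B_k})$; since $\sigma$ takes $O(n^k)$ values, pigeonholing to a single profile class $\mathcal{F}'$ retains an $\Omega(n^{-k})$-fraction of $\mathcal{F}$. Then encode each $A\in\mathcal{F}'$ by the tuple $(A\cap B_1,\ldots,A\cap B_k)$ and view $\mathcal{F}'$ as the set of ``ones'' of a $\{0,1\}$-array of dimension $k$ whose $i$-th axis is indexed by the possible values of $A\cap B_i$ compatible with the fixed profile.

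The key step is to engineer a $k$-dimensional forbidden pattern $M = M(P)$ such that any subarray of $\mathcal{F}'$ matching $M$ corresponds to subsets realizing $P$ as an induced subposet: comparabilities between the images of $p_i$ and $p_j$ are forced by demanding $\varphi(p_i)\cap B_\ell \subseteq \varphi(p_j)\cap B_\ell$ in each block, and non-comparabilities by forcing the reverse inclusion in at least one designated block per incomparable pair. If $\mathcal{F}$ is induced-$P$-free then $\mathcal{F}'$ avoids $M$, and Klazar's theorem yields $\abs{\mathcal{F}'} = O(N^{k-1})$ where $N$ is the side length; after unwinding the encoding (combined with $\binom{n/k}{\text{profile}_i}$ bounds and Stirling) this translates to $\abs{\mathcal{F}} = O_P\!\left(\binom{n}{\lfloor n/2\rfloor}\right)$, contradicting the assumed lower bound on $\abs{\mathcal{F}}$.

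The main obstacle is the construction of $M(P)$ so that its appearance faithfully certifies an \emph{induced} (not merely weak) copy of $P$: the non-comparabilities of $P$ must be witnessed by the product structure of the blocks and not accidentally destroyed by coincidences in unrelated coordinates. A secondary inconvenience is that the constant $C(P)$ inherits the tower-type dependence on $\abs{P}$ from Klazar's theorem in dimension $\abs{P}$, which is why the bound here is qualitatively much weaker than in the non-induced setting treated earlier in the paper.
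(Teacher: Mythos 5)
The paper does not prove Theorem~\ref{inducedLa}; it is quoted from Methuku and P\'alv\"olgyi \cite{methukup}, and the paper's only pointer to the proof is the remark that the constant depends on the higher-dimensional analogue of the Marcus--Tardos theorem \cite{marcus2004,klazar2007}. So your instinct to invoke that theorem is consistent with what the paper says, but there is no in-paper proof to compare against.

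That said, the concrete encoding you propose has a genuine gap. The Klazar--Marcus theorem concerns pattern containment in \emph{ordered} $d$-dimensional $0$--$1$ arrays: a pattern $M$ appears in an array $A$ when one can select index tuples, strictly increasing along each axis, whose entries in $A$ cover the ones of $M$, and the conclusion that an $M$-avoiding $N^d$ array has $O(N^{d-1})$ ones is stated in that ordered sense. You index the $i$-th axis of your array by the subsets $A\cap B_i\subseteq B_i$ of a fixed size; these subsets carry no linear order under which the appearance of a fixed positional pattern would certify the inclusions $\varphi(p_i)\cap B_\ell \subseteq \varphi(p_j)\cap B_\ell$ (let alone the designated non-inclusions) that you want. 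Containment between subsets of $B_i$ is a partial order of huge width, not a chain, and no linear extension of it turns a finite positional pattern into an inclusion-and-non-inclusion template, so the step ``if $\mathcal{F}$ is induced-$P$-free then $\mathcal{F}'$ avoids $M$'' does not go through. A sanity check confirms that the accounting cannot close as written: the profile pigeonholing loses a factor of only $O(n^k)$, and the Klazar bound you want to invoke is $O\bigl(\binom{n/k}{n/(2k)}^{k-1}\bigr)$; by Stirling their product is $o\bigl(\binom{n}{\lfloor n/2\rfloor}\bigr)$, which is impossible since the middle level of the cube is itself induced-$P$-free for every $P$ with $h(P)\ge 2$, forcing $\La^{\#}(n,P)\ge\binom{n}{\lfloor n/2\rfloor}$. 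The real work --- translating the partial order within each block into honestly ordered axes while keeping control of the count --- is precisely the obstacle you flag at the end and do not resolve, so the proposal is not a proof.
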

	
	Define the Lubell function of a family of subsets of $[n]$ as $l_n(\A)=\lubell$. The Lubell function is the sum of the proportion of sets selected of each size; clearly $l_n(\A) \geq \frac{\abs \A}{\binom{n}{\left\lfloor \frac{n}{2}\right\rfloor }}$. Define $\lambda^\#_n(P)$ as the maximum value of $l_n(\A)$ over all induced $P$-free families $\A \subset 2^{[n]}$. While $\frac{\La^\#(n,P)}{\binom{n}{\left\lfloor \frac{n}{2}\right\rfloor }}$ is known to have a constant bound for every $P$, it is not currently known if $\lambda^\#_n(P)$ also has a constant bound for every $P$. We prove the following result about $\lambda^\#_n(P)$.
	
	\begin{theorem}\label{induced}
		For every poset $P$ and every $c>\frac 1 2$,
		\[\lambda^\#_n(P)=\Ordo(n^c).\]
	\end{theorem}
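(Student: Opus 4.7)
My plan is to derive a family of weighted Lubell-type inequalities from Theorem~\ref{inducedLa} applied to sub-Boolean lattices of various sizes, and then combine them by a covering argument on the levels $\{0,1,\dots,n\}$.

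For any fixed $Y\subseteq[n]$ with $|Y|=m$, the interval $2^Y$ is isomorphic to $2^{[m]}$, and $\A\cap 2^Y$ is induced-$P$-free as a subfamily of $\A$. Therefore Theorem~\ref{inducedLa} gives $|\A\cap 2^Y|\le C\binom{m}{\lfloor m/2\rfloor}$ for the constant $C=C(P)$ from that theorem. Summing over all $Y$ of size $m$ and using the identity $\binom{n-|A|}{m-|A|}/\binom{n}{m}=\binom{m}{|A|}/\binom{n}{|A|}$, I would rearrange this to
\[
\sum_{A\in\A,\;|A|\le m}\frac{1}{\binom{n}{|A|}}\cdot\frac{\binom{m}{|A|}}{\binom{m}{\lfloor m/2\rfloor}}\;\le\; C.
\]
A symmetric argument applied to upper intervals $[X,[n]]$ with $|X|=a$ gives an analogous weighted inequality whose weight is peaked near level $a+\lfloor(n-a)/2\rfloor$.

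In each case the weight factor is a normalized binomial coefficient attaining its maximum value $1$ at the central level, and Stirling's estimates show that it stays at least $\tfrac12$ within a window of radius $\Theta(\sqrt{m})$ (resp.\ $\Theta(\sqrt{n-a})$) around that peak. Fixing $c>\tfrac12$, I would choose a grid of parameters $m_1<\cdots<m_t$ (for the lower intervals) and $a_1>\cdots>a_s$ (for the upper intervals), spaced so that each level $k\in\{0,\dots,n\}$ lies in the half-max window of at least one grid element; a spacing proportional to $\sqrt{m}$ (resp.\ $\sqrt{n-a}$) allows us to take $t+s=O(\sqrt{n})$, which is certainly $O(n^c)$.

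Summing the $t+s$ weighted inequalities and swapping the order of summation gives $\sum_{A\in\A}\frac{1}{\binom{n}{|A|}}\cdot S(|A|) \le C(t+s)$, where $S(k)$ denotes the total weight contributed at level $k$. By the covering property $S(k)\ge\tfrac12$ for every $k$, so $l_n(\A)\le 2C(t+s) = O(n^c)$, as required. The main technical hurdle will be making the grid construction explicit: the half-max windows shrink to width $O(1)$ near the extreme levels $k=0$ and $k=n$, so the grid has to be dense at its ends, but this reduces to a routine estimate on binomial coefficients.
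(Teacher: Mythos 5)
Your argument is correct and takes a genuinely different route from the paper's. The paper reuses its interval-chain machinery (Lemma~\ref{intervalbound}): it cuts the level range into a middle band (handled by applying Theorem~\ref{inducedLa} to each $k$-interval $[A_i,A_{i+k}]$ of an interval chain, giving $\alpha^{\#}\lesssim Cm\binom{k}{\lfloor k/2\rfloor}$, hence a Lubell contribution of $O(m/\sqrt k)$) and two boundary bands of width $k$, which are handled recursively. Optimizing $k$ yields the contraction $c' \mapsto 2c'/(2c'+1)$, whose iterates converge to $1/2$ but never reach it, so the paper obtains $O(n^c)$ for each $c>1/2$ separately. You instead average Theorem~\ref{inducedLa} over all lower intervals $2^Y$, $|Y|=m$ (and, symmetrically, upper intervals $[X,[n]]$), which after the identity $\binom{n-|A|}{m-|A|}/\binom{n}{m}=\binom{m}{|A|}/\binom{n}{|A|}$ gives a family of weighted Lubell inequalities with Gaussian-like weight profiles peaked at level $\lfloor m/2\rfloor$; a grid of $O(\sqrt n)$ parameters then covers every level at weight at least $\tfrac12$, and summing gives $l_n(\A)\le 2C(t+s)=O(\sqrt n)$ in a single pass. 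This is cleaner (no recursion, a single uniform construction) and in fact yields the sharper bound $\lambda_n^{\#}(P)=O(\sqrt n)$ rather than merely $O(n^c)$ for every $c>1/2$, so you have slightly strengthened the theorem. The ``routine'' details you defer — the half-max width $|k-m/2|\le\sqrt{(m\ln 2)/2}+O(1)$, and the explicit grid such as $m_i\approx i^2$ — do go through, and the integral $\int_0^{n/2}dc/\sqrt c = O(\sqrt n)$ keeps the grid size controlled even near the extreme levels where the windows have width $O(1)$.
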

	
	\medskip
	
	The paper is organized as follows: in the second section we define our more general chain structure called an \emph{interval chain} and give a proof of Theorem \ref{mainthm} and Corollary \ref{maincor} using it.  In the third section we give another proof of Corollary \ref{maincor}, with a better constant, using an embedding of arbitrary posets into a product of generalized diamonds.  We also give a proof of Proposition \ref{lowerbound}.  In the fourth section we use the interval chain technique to prove Theorem \ref{induced}.

	\section{Interval chains and the proof of Theorem \ref{mainthm}}
	\label{intchainsec}
	We begin by proving some lemmas which allow us to extend Lubell's argument to more general structures.    Let $\pi \in S_n$ be a permutation and $A \subset [n]$ be a set, then $A^\pi$ denotes the set $\{\pi(a):a\in A\}$.   Moreover, for a collection of sets $\h \subset 2^{[n]}$ we define $\h^\pi$ to be the collection $\{A^\pi:A \in \h\}$.
	\begin{lemma}
		\label{l1}
		Let $\h \subset 2^{[n]}$ be a collection of sets and $A \subset [n]$ be any set.  Let $N_i = N_i(\h)$ be the number of sets in $\h$ of cardinality $i$.  The number of permutations $\pi$ such that $A \in \h^{\pi}$ is $N_{\abs{A}} \abs{A}! (n-\abs{A})!$.
	\end{lemma}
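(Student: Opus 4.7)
The plan is to count pairs $(B,\pi)$ where $B \in \h$ and $B^\pi = A$, and then observe that each qualifying permutation arises from exactly one such pair, so the count of pairs equals the count we want.

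First I would note that $A \in \h^\pi$ means $A = B^\pi$ for some $B \in \h$. Since $\pi$ is a bijection of $[n]$, we have $|B^\pi| = |B|$, so only sets $B \in \h$ with $|B| = |A|$ can contribute; there are $N_{|A|}$ such sets. Next, I would fix such a $B$ and count the permutations $\pi$ with $B^\pi = A$: the restriction $\pi|_B$ must be a bijection from $B$ to $A$, giving $|A|!$ choices, and independently $\pi|_{[n]\setminus B}$ must be a bijection onto $[n]\setminus A$, giving $(n-|A|)!$ choices. These choices combine freely, yielding $|A|!\,(n-|A|)!$ permutations per set $B$.

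The only step that requires a moment's thought is ruling out double-counting across different choices of $B$. This is easy: if $B_1, B_2 \in \h$ both satisfy $B_i^\pi = A$ for the same $\pi$, then $B_1^\pi = B_2^\pi$, and since $\pi$ is a bijection this forces $B_1 = B_2$. Hence summing over the $N_{|A|}$ sets $B$ of size $|A|$ in $\h$ gives exactly $N_{|A|}\,|A|!\,(n-|A|)!$ permutations, as claimed. I do not anticipate any real obstacle; the statement is essentially the standard orbit-counting observation behind Lubell's inequality.
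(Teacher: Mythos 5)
Your proof is correct and is essentially the same as the paper's: both count, for each of the $N_{|A|}$ sets of size $|A|$ in $\h$, the $|A|!\,(n-|A|)!$ permutations mapping it onto $A$, and both justify no double-counting by noting that a single permutation cannot send two distinct sets to the same image.
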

	\begin{proof}
		Let $S_1,\ldots, S_{N_{\abs{A}}}$ be the collection of sets in $\h$ of size $\abs{A}$.  The number of permutations $\pi$ such that $S_i$ is mapped to $A$ is $\abs{A}!(n-\abs{A})!$, since we can map the elements of $S_i$ to $A$ arbitrarily and the elements of $[n] \setminus S_i$ to $[n] \setminus A$ arbitrarily.   Moreover, no permutation $\pi$ maps two sets, $S_i,S_j$, to $A$, for then $S_i^\pi =S_j^\pi$, that is $\{\pi(s):s \in S_i\} = \{\pi(s):s \in S_j\}$ and so $S_i = S_j$, a contradiction. Since there are $N_{\abs{A}}$ sets in $\h$ of size $\abs{A}$, and we have shown that the set of permutations mapping each of them to $A$ is disjoint.  It follows  that the number of permutations $\pi$ such that $A \in \h^\pi$ is $N_{\abs{A}} \abs{A}! (n-\abs{A})!$.
	\end{proof}
	
	For a collection $\h \subset 2^{[n]}$ and a poset, $P$, let $\alpha(\h,P)$ denote the size of the largest subcollection of $\h$ containing no $P$.  Observe that $\alpha(\h,P) = \alpha(\h^\pi,P)$ for all $\pi \in S_n$ since containment relations are unchanged by permutations of $[n]$.
	\begin{lemma}\label{bound}
		Let $\A$ be a $P$-free family in $2^{[n]}$ and $\h$ be a fixed collection.   We have
		\begin{displaymath}
			\sum_{A \in \A} \frac{N_{\abs{A}}}{\binom{n}{\abs{A}}} \le \alpha(\h,P).
		\end{displaymath}
		In particular, if all of the $N_i$ are equal to the same number $N$, we have
		\begin{displaymath}
			\sum_{A \in \A} \frac{1}{\binom{n}{\abs{A}}} \le \frac{\alpha(\h,P)}{N}.
		\end{displaymath}
	\end{lemma}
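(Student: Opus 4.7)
The plan is a standard Lubell-style double counting, where we sum over all permutations $\pi \in S_n$ and consider the intersection $\A \cap \h^\pi$. The key observation is that for any fixed $\pi$, the intersection $\A \cap \h^\pi$ is a subcollection of $\h^\pi$ which is $P$-free (since it is contained in the $P$-free family $\A$), so its size is at most $\alpha(\h^\pi, P) = \alpha(\h, P)$, using the permutation-invariance of $\alpha$ noted just before the lemma.

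I would then count the pairs $(\pi, A) \in S_n \times \A$ with $A \in \h^\pi$ in two ways. Summing first over $\pi$ gives
\begin{displaymath}
\sum_{\pi \in S_n} \abs{\A \cap \h^\pi} \le n! \cdot \alpha(\h, P).
\end{displaymath}
On the other hand, summing first over $A \in \A$, Lemma \ref{l1} tells us that each $A$ is counted exactly $N_{\abs{A}} \abs{A}! (n-\abs{A})!$ times, so the same sum equals
\begin{displaymath}
\sum_{A \in \A} N_{\abs{A}} \abs{A}! (n-\abs{A})!.
\end{displaymath}
Equating and dividing through by $n!$ yields $\sum_{A \in \A} \frac{N_{\abs{A}}}{\binom{n}{\abs{A}}} \le \alpha(\h, P)$, which is the desired inequality.

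The ``in particular'' statement is immediate: if $N_i = N$ for every relevant $i$, we factor $N$ out of the left-hand side and divide. There is no real obstacle here; the only conceptual point is to realize that $\A \cap \h^\pi$ inherits $P$-freeness from $\A$, which converts Lubell's original averaging (where $\h$ is a single maximal chain, so $\alpha(\h,P)=h(P)-1$ for $P$ containing a chain of length $h(P)$) into a flexible tool where any structure $\h$ with a known $P$-free subcollection bound $\alpha(\h,P)$ can be plugged in. This flexibility is precisely what will be exploited later with interval chains to prove Theorem \ref{mainthm}.
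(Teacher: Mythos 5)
Your proof is correct and follows essentially the same double-counting argument as the paper: count pairs $(A,\pi)$ with $A\in\h^\pi$, bound $\abs{\A\cap\h^\pi}$ by $\alpha(\h,P)$ via permutation-invariance, invoke Lemma \ref{l1} for the other side, and rearrange. The concluding remarks about flexibility are a nice gloss but not a departure in method.
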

	\begin{proof}
		We will double count pairs $(A,\pi)$ where $A \in \h^\pi$.  First fix a set $A$, then Lemma \ref{l1} shows there are $N_{\abs{A}} \abs{A}! (n- \abs{A})!$ permutations for which $A \in \h^\pi$.  Now fix a permutation $\pi \in S_n$.  By the definition of $\alpha(\h,P)$ we have $\abs{\A \cap \h^\pi} \le \alpha(\h,P)$.  Since there are $n!$ permutations, it follows that the number of pairs $(A,\pi)$ is at most $\alpha(\h,P) n!$.  Thus, we have
		\begin{displaymath}
			\sum_{A \in \A} N_{\abs{A}} \abs{A}! (n- \abs{A})! \le \alpha(\h,P) n!,
		\end{displaymath}
		and rearranging yields the result.
	\end{proof}
	
	We introduce a structure $\h \subset 2^{[n]}$ which we call a \emph{$k$-interval chain}.  Define the interval $[A,B]$ to be the set $\{C:A \subseteq C \subseteq B\}$.  Fix a maximal chain $\C=\{A_0=\emptyset, A_1, \ldots, A_{n-1}, A_n=[n]\}$ where $A_i \subset A_{i+1}$ for $0\le i \le n-1$. From $\C$ we define the $k$-interval chain $\C_k$ as  
	\begin{displaymath}
		\C_k = \bigcup_{i=0}^{n-k} [A_i, A_{i+k}].
	\end{displaymath}

	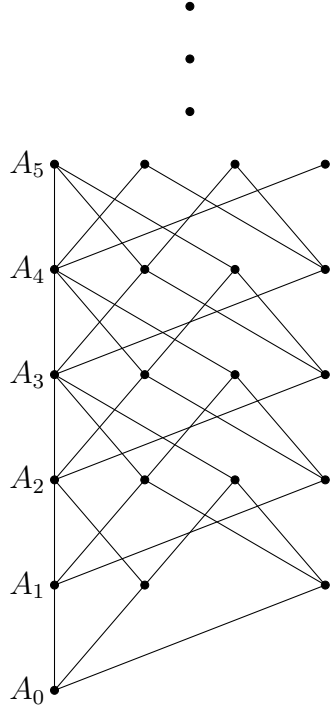
\begin{figure}[h]
		\centering
		\begin{tikzpicture}[line cap=round,line join=round,>=triangle 45,x=1.2cm,y=1.4cm]
		\draw (4.0,2.0)-- (4.0,3.0);
		\draw (4.0,3.0)-- (4.0,4.0);
		\draw (4.0,4.0)-- (4.0,5.0);
		\draw (4.0,5.0)-- (4.0,6.0);
		\draw (4.0,6.0)-- (4.0,7.0);
		\draw (4.0,2.0)-- (5.0,3.0);
		\draw (4.0,2.0)-- (7.0,3.0);
		\draw (4.0,3.0)-- (5.0,4.0);
		\draw (5.0,3.0)-- (4.0,4.0);
		\draw (5.0,3.0)-- (6.0,4.0);
		\draw (6.0,4.0)-- (7.0,3.0);
		\draw (4.0,3.0)-- (7.0,4.0);
		\draw (4.0,4.0)-- (5.0,5.0);
		\draw (5.0,4.0)-- (4.0,5.0);
		\draw (5.0,4.0)-- (6.0,5.0);
		\draw (4.0,5.0)-- (6.0,4.0);
		\draw (6.0,5.0)-- (7.0,4.0);
		\draw (4.0,4.0)-- (7.0,5.0);
		\draw (4.0,5.0)-- (5.0,6.0);
		\draw (4.0,6.0)-- (5.0,5.0);
		\draw (4.0,6.0)-- (6.0,5.0);
		\draw (5.0,5.0)-- (6.0,6.0);
		\draw (6.0,6.0)-- (7.0,5.0);
		\draw (4.0,5.0)-- (7.0,6.0);
		\draw (4.0,7.0)-- (5.0,6.0);
		\draw (4.0,7.0)-- (6.0,6.0);
		\draw (4.0,6.0)-- (5.0,7.0);
		\draw (4.0,6.0)-- (7.0,7.0);
		\draw (5.0,6.0)-- (6.0,7.0);
		\draw (6.0,7.0)-- (7.0,6.0);
		\draw (5.0,4.0)-- (7.0,3.0);
		\draw (5.0,5.0)-- (7.0,4.0);
		\draw (5.0,6.0)-- (7.0,5.0);
		\draw (5.0,7.0)-- (7.0,6.0);
		\begin{scriptsize}
		\draw [fill=black] (4.0,2.0) circle (1.5pt);
		\draw[color=black] (3.7, 2.0) node {$\mathbf{\mbox{\large $A_0$}}$};
		\draw [fill=black] (4.0,3.0) circle (1.5pt);
		\draw[color=black] (3.7, 3.0) node {$\mathbf{\mbox{\large $A_1$}}$};
		\draw [fill=black] (4.0,4.0) circle (1.5pt);
		\draw[color=black] (3.7, 4.0) node {$\mathbf{\mbox{\large $A_2$}}$};
		\draw [fill=black] (4.0,5.0) circle (1.5pt);
		\draw[color=black] (3.7, 5.0) node {$\mathbf{\mbox{\large $A_3$}}$};
		\draw [fill=black] (4.0,6.0) circle (1.5pt);
		\draw[color=black] (3.7, 6.0) node {$\mathbf{\mbox{\large $A_4$}}$};
		\draw [fill=black] (4.0,7.0) circle (1.5pt);
		\draw[color=black] (3.7, 7.0) node {$\mathbf{\mbox{\large $A_5$}}$};
		\draw [fill=black] (5.0,3.0) circle (1.5pt);
		\draw [fill=black] (7.0,3.0) circle (1.5pt);
		\draw [fill=black] (5.0,4.0) circle (1.5pt);
		\draw [fill=black] (6.0,4.0) circle (1.5pt);
		\draw [fill=black] (7.0,4.0) circle (1.5pt);
		\draw [fill=black] (5.0,5.0) circle (1.5pt);
		\draw [fill=black] (6.0,5.0) circle (1.5pt);
		\draw [fill=black] (7.0,5.0) circle (1.5pt);
		\draw [fill=black] (5.0,6.0) circle (1.5pt);
		\draw [fill=black] (6.0,6.0) circle (1.5pt);
		\draw [fill=black] (7.0,6.0) circle (1.5pt);
		\draw [fill=black] (5.0,7.0) circle (1.5pt);
		\draw [fill=black] (7.0,7.0) circle (1.5pt);
		\draw [fill=black] (6.0,7.0) circle (1.5pt);
		\draw [fill=black] (5.5,8.0) circle (1.5pt);
		\draw [fill=black] (5.5,8.5) circle (1.5pt);
		\draw [fill=black] (5.5,7.5) circle (1.5pt);
		\end{scriptsize}
		\end{tikzpicture}
		\caption{3-interval chain}
		\label{fig:IntervalChain}
	\end{figure}
	
	See Figure~\ref{fig:IntervalChain} for an example of an interval chain. We begin by deriving some properties of interval chains. In the rest of the paper we shall work with the $k$-interval chain $\C_k^0$ defined by $A_i=[i]$; other $k$-interval chains are related to it by permutation. It is easy to see that the indicator vectors of the sets in $\C_k^0$ consist of an initial segment of 1's, then $k$ arbitrary bits, followed by 0's. We call the number of 1's in a 0--1 vector the weight of the vector (which is the size of the corresponding set).
	
	We will now prove a sequence of lemmas that we use to bound the number of sets in a $P$-free subfamily of a $k$-interval chain. We call two sets related if one of them contains the other.   The idea, following Burcsi, Nagy \cite{burcsi2013method} and Chen, Li \cite{chen2014note}, is to partition $P$ into $h(P)$ antichains and embed the antichains into a given subcollection of $\C_k^0$, one by one,  in such a way that every set in one antichain is related  to every set in the next antichain.   To this end, we ignore those sets in $\C_k^0$ which may be unrelated to some previously embedded set.   The key lemma, Lemma \ref{unrelated}, gives an upper bound to how many sets we must ignore.
	
	For convenience, from now on we identify sets and their indicator vectors.
	
	\begin{lemma}\label{levelsize}
		For $k\leq m \leq n-k$, the number of sets of size $m$ in a $k$-interval chain is $2^{k-1}$. The number of such sets which have at least $j$ $0$'s before the last 1 is $\sum_{h=j}^{k-1} \binom{k-1}{h}$.
	\end{lemma}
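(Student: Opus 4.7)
The plan is to set up a bijection between the sets of size $m$ in $\C_k^0$ and the $(k-1)$-bit binary strings. Fix $v \in \C_k^0$ of weight $m$, and let $q$ denote the position of the last $1$ in $v$. Since $m \geq k$, we have $q \geq m \geq k$. By the definition of $\C_k^0$, there exists some $a' \in [0, n-k]$ with $v = 1^{a'} x' 0^{n-a'-k}$ for some $x' \in \{0,1\}^k$; the last $1$ of $v$ then satisfies $q \leq a'+k$, so $a' \geq q-k$. Since positions $1, \ldots, a'$ of $v$ are all $1$, so in particular are positions $1, \ldots, q-k$, and we may reset the witness to the canonical value $a = q-k$. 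Writing $y$ for the substring of $v$ at positions $q-k+1, \ldots, q-1$, and noting that position $q$ is a $1$ by definition of $q$, we obtain the canonical form $v = 1^{q-k}\,y\,1\,0^{n-q}$ with $y \in \{0,1\}^{k-1}$.

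Conversely, any $y \in \{0,1\}^{k-1}$ determines $q$ through the weight equation $m = (q-k) + \mathrm{weight}(y) + 1$, giving $q = m + k - 1 - \mathrm{weight}(y)$. The hypothesis $k \leq m \leq n-k$ ensures that $q \in [m, m+k-1] \subseteq [k, n]$ as $\mathrm{weight}(y)$ varies in $[0, k-1]$, so the assignment produces a legitimate element of $\C_k^0$. Hence $y \mapsto v$ is a bijection between $\{0,1\}^{k-1}$ and the sets of size $m$ in $\C_k^0$, proving the count $2^{k-1}$.

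For the second claim, observe that in the canonical form the leading block $1^{q-k}$ contains no $0$'s and position $q$ itself is a $1$, so the $0$'s of $v$ lying strictly before position $q$ are precisely the $0$'s of $y$. Thus sets of size $m$ with at least $j$ zeros before the last $1$ correspond bijectively to strings $y \in \{0,1\}^{k-1}$ with at least $j$ zeros, of which there are exactly $\sum_{h=j}^{k-1}\binom{k-1}{h}$. The only mildly delicate step is justifying that $a = q-k$ is a legitimate witness—equivalently, that positions $1, \ldots, q-k$ are all forced to be $1$—which is precisely the content handled by comparing $q$ with the length of the initial run of $1$'s via any original witness $a'$; once that is in place the rest of the argument is routine counting.
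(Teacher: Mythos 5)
Your proof is correct and follows essentially the same route as the paper's: both set up a bijection between sets of size $m$ in $\C_k^0$ and $(k-1)$-bit strings by reading off the $k-1$ bits immediately preceding the last $1$, then derive the second count by noting that all $0$'s before the last $1$ live in that window. Your version is slightly more explicit in two respects — normalizing the witness interval index to the canonical $a=q-k$, and verifying that the range hypothesis $k\leq m\leq n-k$ keeps $q$ inside $[k,n]$ on the way back — but the underlying idea and counting are the same as the paper's bijection $\varphi$.
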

	\begin{proof}
		We give a bijection $\varphi$ between 0--1 vectors of length $k-1$ and sets of size $m$ in $\C_k^0$. Let $u$ be a 0--1 vector of length $k-1$, and let $w$ be the weight of $u$. Let $\varphi(u)=\overbrace{111\ldots1}^{m-w-1}\overbrace{\vphantom1 u}^{k-1}1\overbrace{0000000\ldots0}^{n-m-k+w+1}$. A set of size $m$ in $\C_k^0$ is assigned to $u$ if and only if in its indicator vector the last $k-1$ bits leading up to (but not including) the last 1 coincide with $u$.   We show $\varphi$ is injective and surjective.   If $\varphi(u) = \varphi(v)$, then both $u$ and $v$ consist of the $k-1$ bits preceding the final 1 so $u=v$, and it follows $\varphi$ is injective.  Now, take an arbitrary weight $m$ vector, $z$, corresponding to a set in $\C_k^0$.  Find the last $1$ occurring in $z$ and let $u$ be the vector of length $k-1$ immediately preceding it (such a vector exists since $m \ge k$).  Then $\varphi(u) = z$, and we have that $\varphi$ is surjective.
		
		There are $2^{k-1}$ vectors $u$ of length $k-1$. Among such vectors,  $\sum_{h=j}^{k-1} \binom{k-1}{h}$ of them have at least $j$ 0's, and precisely these vectors are the ones mapped to vectors with at least $j$ 0's before the last 1. The condition $k\leq m \leq n-k$ guarantees that both $m-w-1$ and $m+k-w+1$ are between 0 and $n$.
	\end{proof}
	
	\begin{lemma}\label{unrelated}
		For $3k-3\leq m \leq n-k+1$, the number of sets in a $k$-interval chain which have  size at most $m-1$, and which are unrelated to some other set in the $k$-interval chain of size at least $m$, is $(3k-5)2^{k-2}$.
	\end{lemma}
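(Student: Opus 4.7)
The plan is to first characterize those $C \in \C_k^0$ with $|C| \le m-1$ that are unrelated to some $D \in \C_k^0$ with $|D| \ge m$, and then count them. Writing each $C \in \C_k^0$ in the canonical form $1^{\ell_C - k}\, v_C\, 1\, 0^{n - \ell_C}$ with $v_C \in \{0,1\}^{k-1}$ (valid since the hypotheses force $\ell_C \ge k$), I claim that such a $C$ is unrelated to some larger $D$ if and only if $\ell_C \ge m - k + 2$, equivalently $C \not\subseteq [m - k + 1]$.

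For the easy direction of the characterization, suppose $C \subseteq [m-k+1]$ and $D \in \C_k^0$ has $|D| \ge m$. Since $\ell_D \ge |D| \ge m$, either $\ell_D \ge m+1$, in which case $D \supseteq [\ell_D - k] \supseteq [m-k+1] \supseteq C$, or $\ell_D = m$, in which case $|D| = (m-k) + |v_D| + 1 \ge m$ forces $v_D = 1^{k-1}$ and hence $D = [m] \supseteq C$. For the reverse direction, choose any $j \in C$ with $j \ge m - k + 2$ (for instance $j = \ell_C$), and take $D = [j-1]$ when $j \ge m+1$, and $D = [m+1] \setminus \{j\}$ when $m-k+2 \le j \le m$. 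In the second case $j \ge m - k + 2$ ensures $[m-k+1] \subseteq D \subseteq [m+1]$, which places $D$ inside the interval $[A_{m-k+1}, A_{m+1}] \subseteq \C_k^0$; the bounds $3k-3 \le m \le n-k+1$ keep all indices valid. In each case $|D| \ge m > |C|$ and $j \in C \setminus D$, so $C$ and $D$ are unrelated.

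It remains to count sets $C \in \C_k^0$ with $|C| \le m-1$ and $\ell_C \ge m-k+2$. Parameterizing by $(\ell, v) = (\ell_C, v_C)$, the constraint $|C| = \ell - k + 1 + |v| \le m - 1$ combined with $|v| \ge 0$ gives $\ell \le m + k - 2$, so $\ell$ ranges over $\{m-k+2, \ldots, m+k-2\}$ with $|v| \le m + k - 2 - \ell$. The $k-2$ values $\ell \in \{m-k+2, \ldots, m-1\}$ impose no constraint on $v$ and contribute $(k-2) \cdot 2^{k-1}$ sets. For $\ell = m + q$ with $0 \le q \le k-2$, the admissible $v$ number $\sum_{w=0}^{k-2-q}\binom{k-1}{w}$; swapping the order of summation over $q$ simplifies this partial-binomial double sum to $(k-1) \cdot 2^{k-2}$. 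The total is $(k-2)\,2^{k-1} + (k-1)\,2^{k-2} = (3k-5)\,2^{k-2}$, as claimed.

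The main subtlety lies in the reverse direction of the characterization: a single uniform choice of $D$ does not work for every $j$, because the natural choice $D = [j-1]$ has size less than $m$ once $j \le m$. The two-case construction above, which substitutes $D = [m+1]\setminus\{j\}$ in the remaining regime, fits into $\C_k^0$ precisely because the threshold $m-k+2$ in the characterization aligns with the left endpoint of a valid interval $[A_{m-k+1}, A_{m+1}]$; the hypothesis $m \le n-k+1$ is what makes this interval legal.
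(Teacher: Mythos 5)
Your proof is correct, and it takes a genuinely cleaner route than the paper's. The paper characterizes the ``bad'' sets (those unrelated to some set of size $\ge m$) by their weight $m-i$ together with the number $j$ of $0$'s before the last $1$, declaring a set bad iff $j \ge i-k+2$; establishing this requires a ``transformation'' argument in both directions (modify the indicator vector and argue when the result can or cannot lie in $\C_k^0$). You instead show the equivalent but cleaner criterion that $C$ is bad iff $C \not\subseteq [m-k+1]$, i.e.\ $\ell_C \ge m-k+2$. Your forward direction is a one-line containment observation (every $D\in\C_k^0$ with $|D|\ge m$ contains $[m-k+1]$), replacing the paper's impossibility argument, and your reverse direction constructs an explicit witness $D$ ($[j-1]$ or $[m+1]\setminus\{j\}$) rather than modifying $C$ bit by bit. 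Your parameterization by $(\ell_C,v_C)$ gives a count $(k-2)2^{k-1}+\sum_{q=0}^{k-2}\sum_{w=0}^{k-2-q}\binom{k-1}{w}$, which after the index swap collapses to $(k-1)2^{k-2}$, matching the paper's double sum; the two countings are essentially reparameterizations of one another (position of the last $1$ vs.\ weight), and both land on $(3k-5)2^{k-2}$.

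Two small points worth tightening. First, the parenthetical ``valid since the hypotheses force $\ell_C \ge k$'' reads as slightly circular, since one cannot assume $C$ is already known to be bad when setting up the canonical form; it is cleaner to observe that any $C$ with $\ell_C < k$ satisfies $C\subseteq[k-1]\subseteq[m-k+1]$ (using $m\ge 3k-3 \ge 2k-2$) and is therefore related to every large $D$, so the canonical form is only needed for $\ell_C \ge k$, exactly the sets one ends up counting. Second, be aware that the paper's finer-grained characterization (weight and number of $0$'s before the last $1$) is reused verbatim in the subsequent Proposition~\ref{worstset}, which in turn feeds into Lemma~\ref{maxsubset}; your criterion is logically equivalent (a set of size $m-i$ with $j$ zeros before the last $1$ has $\ell_C = m-i+j$, and $\ell_C\ge m-k+2$ is exactly $j\ge i-k+2$), but if one were to adopt your formulation throughout, Proposition~\ref{worstset} would need a matching restatement.
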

	
	\begin{proof}
		We will show that the sets in the $k$-interval chain $\C_k^0$, which are unrelated to at least one set of size $m$ or greater in $\C_k^0$ are: all indicator vectors in $\C_k^0$ of weight between $m-1$ and $m-(k-2)$ inclusive; plus, among indicator vectors with weight $m-i$ with $k-1\leq i \leq 2k-3$, those which have at least $i-k+2$ 0's before the last 1. Let's denote the collection of these vectors by $\mathcal{S}$. Then, by Lemma \ref{levelsize}, we can calculate the number $\abs{\mathcal{S}}$ of such vectors:
		\begin{multline*}
			(k-2)2^{k-1}+\sum_{i=k-1}^{2k-3}\sum_{h=i-k+2}^{k-1}{k-1 \choose h}=(k-2)2^{k-1}+\sum_{j=1}^{k-1}\sum_{h=j}^{k-1}{k-1 \choose h}=\\
			=(k-2)2^{k-1}+\sum_{h=1}^{k-1}h{k-1 \choose h}=(k-2)2^{k-1}+(k-1)2^{k-2}=(3k-5)2^{k-2}.
		\end{multline*}
		
		First we show that if $v \in \mathcal{S}$, there is a vector of weight $m$ in $\C_k^0$ which is unrelated to it. Let $m-i$ be the weight of $v$. We need to change at least one 1 to 0 (i.e., remove some elements), and change $i$ more 0's to 1's than we just removed (that is, add $i$ more elements than we just removed).
		
		Assume that the last 1 in $v$ is at index $l$, so the first $l-k$ elements in $v$ are 1's. Also assume that there are $j$ 0's in $v$ with an index less than $l$. We can change the $l^{th}$ entry of $v$ from 1 to 0, and change the first $i+1$ 0's in $v$ to 1's because $i+1\leq j+k-1$. We obtain either a vector with at least $l-k+2$ initial 1's, and 0's from an index  $\le l$; or a vector with $l-1$ initial 1's, and 0's from an index $\le l+k-1$ (see the figure below). Either way the difference between the index of the last 1 and the first 0 is at most $k-1$, so the obtained vector is in $\C_k^0$.
		\noindent \begin{center}
			\begin{tabular}{ccc}
				$\begin{gathered}\overbrace{111111111}^{\textrm{initial segment}}\overbrace{00010}^{\leq k-1}1\overbrace{00000}^{k-1}000\\
				\downarrow\\
				\overbrace{111111111}\overbrace{11010}^{\leq k-1}0\overbrace{00000}^{k-1}000
				\end{gathered}
				$ & or & $\begin{gathered}\overbrace{111111111}^{\textrm{initial segment}}\overbrace{00010}^{\leq k-1}1\overbrace{00000}^{k-1}000\\
				\downarrow\\
				\overbrace{111111111}\overbrace{11111}^{\leq k-1}0\overbrace{11000}^{k-1}000
				\end{gathered}
				$\tabularnewline
			\end{tabular}
		\end{center}
		
		Conversely, we prove that if $v$ (which is of weight at most $m-i$, $i\geq1$) is not in $\mathcal{S}$, then it is related to all vectors of weight at least $m$ in $\C_k^0$. Assume by contradiction that it is unrelated to a vector $q$ in $\C_k^0$, of weight at least $m$.
		
		Consider the transformation of $v$ into $q$ by changing some 1's to 0's and some 0's to 1's. Let $l'$ be the index of the first 1 that we change to 0. Then $l'\leq l$ (in the transformation given above, it was $l$, the index of the last 1). We can only change those bits from 0's to 1's which are before $l'$ (the number of 0's before $l'$ is at most $j$ since $l'\leq l$), or those which are between $l'+1$ and $l'+k-1$ (at most $k-1$); this is because the new vector will have a 0 at index $l'$ and so it cannot have 1's after index $l'+k-1$ if it is in $\C_k^0$. So if $i+1>j+k-1$, there are not enough 0's in $v$ which could be changed to 1's, so we cannot obtain a vector of weight $m$ or greater, which is in $\C_k^0$ and is unrelated to it.
	\end{proof}
	
	The following detail will be used in the proof of the next lemma.
	
	\begin{proposition}\label{worstset}
		Assume that a set $A$ in $\C_k^0$ is unrelated to some set of size $m$, but it is related
		to all sets of size $m+1$. Then there is a unique set of size $m$ in $\C_k^0$ unrelated
		to it: the one with an indicator vector
		$\overbrace{111\ldots1}^{m-k+1}0\overbrace{11\ldots1}^{k-1}\overbrace{000\ldots0}^{n-m-1}$.
	\end{proposition}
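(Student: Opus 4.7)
The plan is to exploit the structural characterization that $S \in \C_k^0$ if and only if $T(S) - H(S) \leq k$, where $H(S)$ is the length of the initial 1-run of $S$ and $T(S)$ is its largest element. Every $B \in \C_k^0$ then admits the canonical decomposition $B = [H(B)] \cup R(B)$ with $R(B) \subseteq \{H(B)+2, \ldots, H(B)+k\}$ (a window of $k-1$ positions, since $H(B)+1 \notin B$ by definition of $H(B)$). In particular, every $(m+1)$-set of $\C_k^0$ has head in $\{m-k+2, \ldots, m+1\}$ and every $m$-set has head in $\{m-k+1, \ldots, m\}$.

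First I would rule out $|A| > m$. If $|A| = m+1$, then $A$ must be comparable with every other $(m+1)$-set of $\C_k^0$, which is impossible since Lemma \ref{levelsize} supplies $2^{k-1} \geq 2$ such sets and distinct sets of equal size are incomparable. If $|A| \geq m+2$, then $A$ contains every $(m+1)$-set of $\C_k^0$; in particular $A$ contains $[m+1]$ and $[m]\cup\{m+j\}$ for $j=2,\ldots,k$, whose union is $[m+k]$. But every $m$-set $B$ satisfies $B \subseteq [H(B)+k] \subseteq [m+k] \subseteq A$, making $A$ related to every $m$-set and contradicting the hypothesis.

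Hence $|A| \leq m$, and the hypothesis becomes $A \subseteq B$ for every $(m+1)$-set $B \in \C_k^0$. Since each such $B$ contains $[H(B)] \supseteq [m-k+2]$, the intersection of all these sets contains $[m-k+2]$. Conversely, a short case analysis on head and window produces, for each position $p \in \{m-k+3, \ldots, m+k\}$, an $(m+1)$-set of $\C_k^0$ omitting $p$ (for example, the unique $(m+1)$-set with head $m-k+2$ omits $p=m-k+3$; for larger $p$ one uses $(m+1)$-sets with head $p-1$ or with head $m$). Thus the intersection equals $[m-k+2]$, yielding $A \subseteq [m-k+2]$.

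Finally, since $|A| \leq m = |B|$, an $m$-set $B$ is unrelated to $A$ precisely when $A \not\subseteq B$, i.e.\ there is some $a \in A \setminus B$. Such an $a$ satisfies $a \leq m-k+2$ (because $A \subseteq [m-k+2]$) and $a > H(B) \geq m-k+1$ (because $[H(B)] \subseteq B$), which forces $H(B) = m-k+1$ and $a = m-k+2$. With this head, $|R(B)| = m - H(B) = k-1$ matches exactly the size of the enclosing window $\{m-k+3, \ldots, m+1\}$, so $R(B)$ is forced to equal the whole window. The unique candidate is therefore $B = [m-k+1] \cup \{m-k+3, \ldots, m+1\}$, whose indicator vector is $1^{m-k+1}01^{k-1}0^{n-m-1}$; the existence hypothesis then guarantees $m-k+2 \in A$, so this $B$ is indeed unrelated to $A$. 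The most delicate step is the intersection computation in the preceding paragraph, but it reduces to a handful of case checks on the head of $B$.
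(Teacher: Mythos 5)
Your proof is correct, and it takes a genuinely different route from the paper. The paper derives the proposition by revisiting the parameter bookkeeping inside the proof of Lemma~\ref{unrelated}: writing $\abs A = m-i$ and letting $j$ be the number of $0$'s before the last $1$ in $A$'s indicator vector, that proof shows $A$ is unrelated to some set of size $\geq m$ in $\C_k^0$ if and only if $i+1\leq j+k-1$. Applying this at level $m$ and at level $m+1$ forces the equality $j+k-1=i+1$, so the ``budget'' of bits available to transform $A$ into an unrelated set of weight $m$ is exactly used up, and that tightness pins down the unique target vector. Your approach is instead structural and does not look inside the proof of Lemma~\ref{unrelated}: you first rule out $\abs A>m$, then show $A\subseteq[m-k+2]$ by observing that every $(m+1)$-set of $\C_k^0$ has head at least $m-k+2$ and that the intersection of all such sets is exactly $[m-k+2]$, and finally squeeze the witness $a\in A\setminus B$ between $H(B)\geq m-k+1$ and $a\leq m-k+2$, forcing $H(B)=m-k+1$ and hence $B$. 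Both arguments are valid; the paper's is shorter because it reuses machinery already developed, while yours is more self-contained at the cost of the intersection case analysis. A minor simplification there: for the purpose of the proof you only need $\bigcap_{\abs B=m+1}B\subseteq[m-k+2]$, and for positions $p\geq m+2$ it is cleanest to note that $[m+1]\in\C_k^0$ already omits $p$, which avoids the ``head $p-1$ or head $m$'' bookkeeping at the upper end of the window.
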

	\begin{proof}
		We have seen in the proof of Lemma \ref{unrelated} that among all sets in $\C_k^0$, a set $A$ of size $m-i$ is unrelated to some set of size $m$ if and only if $i+1\leq j+k-1$ where $j$ is the number of the number of 0's before the last 1 in the indicator vector of $A$. So $A$ (that is of size $(m+1)-(i+1)$) is related to all sets of size $m+1$ if and only if $(i+1)+1 > j+k-1$. By combining the two inequalities, we have $j+k-1=i+1$. Consider the transformation we have seen in the proof of Lemma \ref{unrelated}. The only way we can obtain an indicator vector of weight $m$ corresponding to $A$ is by removing the last 1 in its indicator vector, and changing all 0's before the last 1, plus the next $k-1$ after it, to 1's. Thus, the only set of size $m$ in $\C_k^0$ which is unrelated to $A$ is the one with an indicator vector $\overbrace{111\ldots1}^{m-k+1}0\overbrace{11\ldots1}^{k-1}\overbrace{000\ldots0}^{n-m-1}$.
	\end{proof}

	\begin{lemma}\label{maxsubset}
		For any poset $P$ of size $\abs{P}$ and height $h$, we have 
		\begin{displaymath}
			\alpha(\C_k,P) \le \abs P + (h-1)(3k-5)2^{k-2} - 1.
		\end{displaymath}
	\end{lemma}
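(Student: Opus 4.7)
The plan is to prove the contrapositive: I assume $\mathcal F \subseteq \C_k^0$ has $\abs{\mathcal F} \ge \abs P + (h-1)(3k-5)2^{k-2}$, and exhibit $P$ as a subposet of $\mathcal F$. First I partition $P$ into $h$ antichains $P_1 \sqcup \cdots \sqcup P_h$ by height, so that $P_i$ consists of the elements of $P$ at height exactly $i$. Since the claim is about subposet (not induced) embeddings, it suffices to produce an injection $\varphi : P \to \mathcal F$ with the strictly stronger property that $\varphi(x) \subsetneq \varphi(y)$ whenever $x \in P_i$, $y \in P_j$ with $i < j$; any such $\varphi$ is automatically a subposet embedding of $P$.

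My approach will be a top-down greedy scan. I sort $\mathcal F$ by decreasing size and maintain a pointer $i$ to the current antichain, initialised to $h$. On scanning a set $A$, I place $A$ as the next image of $P_i$ if $A$ is a subset of every image already placed for $\varphi(P_{i+1}\cup\cdots\cup P_h)$; otherwise I discard $A$. When $\abs{P_i}$ images of $P_i$ have been placed, I decrement $i$ and continue. Let $m_{(i)}$ denote the smallest size among images placed during steps $h, h-1, \ldots, i+1$ (with $m_{(h)}=+\infty$). Applying Lemma \ref{unrelated} with $m=m_{(i)}$ gives that at most $(3k-5)2^{k-2}$ sets of $\C_k^0$ at sizes below $m_{(i)}$ are unrelated to some set of $\C_k^0$ of size at least $m_{(i)}$, and so at most $(3k-5)2^{k-2}$ sets of $\mathcal F$ can be discarded at step $i$; step $h$ has no prior constraints and thus no discards at all. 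A short bookkeeping argument shows that if the algorithm failed at step $i$, one would have
\[
\abs{\mathcal F} \;<\; \sum_{j \ge i}\abs{P_j} + (h-i)(3k-5)2^{k-2} \;\le\; \abs P + (h-1)(3k-5)2^{k-2},
\]
contradicting the hypothesis. Hence the algorithm finishes, producing the desired subposet embedding of $P$ into $\mathcal F$.

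I expect the main obstacle to be the careful per-transition accounting. The delicate case is a set of $\mathcal F$ lying at exactly the size $m_{(i)}$ of a just-placed image: such a set is automatically unrelated to that image (same-size sets in $\C_k^0$ are always incomparable) and so must also be discarded, yet Lemma \ref{unrelated} is stated only for sizes strictly below $m_{(i)}$. I will need either to tune the algorithm — for example by exhausting each size level before decrementing $i$, or by exploiting the extra slack that the subposet (not induced) condition provides between incomparable pairs across distinct antichains — or to track these boundary discards separately and show that they fit within the $(3k-5)2^{k-2}$ budget of the lemma. A minor side issue is verifying the range condition $3k-3 \le m \le n-k+1$ of Lemma \ref{unrelated} for each $m_{(i)}$; this is routine for $n$ sufficiently large, since the portion of $\C_k^0$ lying outside that window contains only $O(k \cdot 2^{k-1})$ sets and is negligible compared with $\abs{\mathcal F}$.
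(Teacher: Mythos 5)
Your high-level strategy matches the paper's: partition $P$ into antichains by height (Mirsky), greedily embed them top-down into $\mathcal F$ by decreasing set size, and bound the per-step discards using Lemma~\ref{unrelated}. But the ``delicate case'' you flag --- a discarded set of size exactly $m_{(i)}$, which Lemma~\ref{unrelated} does not count --- is not a boundary technicality; it is the crux of the proof, and your proposal does not actually resolve it. You list three candidate fixes, but none is carried out, and none corresponds to what is actually needed. In particular, ``exhausting each size level before decrementing $i$'' does not help, because images of $P_i$ can span several sizes and a same-size conflict can occur wherever the greedy crosses from one antichain to the next; and the ``subposet slack'' between incomparable cross-antichain pairs is not exploited anywhere in your greedy, which still demands $\varphi(x)\subsetneq\varphi(y)$ for every such pair.

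The paper's resolution has two ingredients you do not mention. First, the greedy is run with a specific tie-break within each size class: the set with indicator vector $\overbrace{11\ldots1}^{m-k+1}0\overbrace{11\ldots1}^{k-1}\overbrace{00\ldots0}^{n-m-1}$ is processed \emph{last} among sets of size $m$. Second, Proposition~\ref{worstset} shows that this particular set is the unique size-$m$ member of $\C_k^0$ that can be unrelated to a smaller set which is nevertheless related to everything of size $m+1$ or more. These combine into the case split the paper performs: if the last-placed image at a step has this special form, then \emph{all} size-$m$ sets have already been handled, there are no same-size discards, and Lemma~\ref{unrelated} with parameter $m$ applies directly; if not, the special vector was never placed, so Proposition~\ref{worstset} guarantees that every discarded set (now possibly including some of size $m$) is unrelated to something of size $\ge m+1$, and Lemma~\ref{unrelated} with parameter $m+1$ gives the same bound $(3k-5)2^{k-2}$. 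Without this tie-break and the use of Proposition~\ref{worstset}, the per-step discard bound you assert is not justified, so your inequality chain --- and hence the proof --- has a genuine gap. (Your final remark about the range condition $3k-3\le m\le n-k+1$ is also handled differently in the paper, by formally re-embedding $\C_k^0$ on $[n]$ into the middle levels of an interval chain on a slightly larger ground set, which makes the bound exact rather than ``negligible for large $n$''; but that is a minor point compared with the missing tie-break argument.)
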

	\begin{proof}
		We show that if $\h \subseteq \C_k^0$ with $\abs \h \geq \abs P + (h-1)(3k-5)2^{k-2}$, then $\h$ contains $P$ as a subposet. We may notice that a $k$-interval chain on $[n]$ is a subposet of the levels $3k-3$ to $n'-k+1$ of a $k$-interval chain on the larger base set $[n']$ where $(n'-k+1)-(3k-3) = n$ (i.e., $n' = n+4k-4$), with the injection $2^{[n]} \ni A \mapsto \{1,2,\ldots,3k-3\} \cup \{a+3k-3 : a\in A\} \in 2^{[n']}$. So we can assume that the elements of $P$ are embedded from levels $3k-3$ to $n-k+1$ of the interval chain.
		
		Let $A, B \in \h$ be arbitrary sets. We define a total order $\prec_{\h}$ on $\h$: If $\abs A < \abs B$, then let $B \prec_{\h} A$. If $\abs A = \abs B = m$, then their order is chosen arbitrarily, except if one of them, say $B$, is the set with the indicator vector $\overbrace{111\ldots1}^{m-k+1}0\overbrace{11\ldots1}^{k-1}\overbrace{000\ldots0}^{n-m-1}$, then we let $A \prec_{\h} B$ (so $B$ is the largest w.r.t.\ $\prec_{\h}$ among the sets of size $m$ in $\h$).
		
		Mirsky's theorem \cite{mirsky} states that the height of any poset equals the minimum number of antichains into which it can be partitioned. We decompose $P$ into antichains $\A_1,\A_2,\ldots\A_h$, where the elements in $\A_i$ are bigger than or unrelated to elements in $\A_j$ for any $i>j$ and then map the antichains $\A_h, \A_{h-1}, \ldots, \A_1$ into $\h$ one after another, in this order, in $h$ steps as follows. First, we map the elements of $\A_h$ to the smallest $\abs{\A_h}$ sets of $\h$ with respect to the total order $\prec_{\h}$ we defined. The family of these elements of $\h$ is denoted $\h_h$. We then remove all sets in $\h$ which are not proper subsets of every set in $\h_h$.  The family of these removed sets is denoted $\I_h$; in other words, $\I_h$ is the family of sets in $\h$ which are not properly contained in at least one set of $\h_h$. (Notice that $\h_h \subseteq \I_h$.) 
		Now we map $\A_{h-1}$ to the smallest (w.r.t.\ $\prec_{\h}$) $\abs{\A_{h-1}}$ sets of $\h\setminus\I_h$, denoted $\h_{h-1}$. We proceed similarly: we denote the family of the sets in $\h$ which are not properly contained in every set of $\h_h\cup\ldots\cup\h_i$ with $\I_i$, and map $\A_{i-1}$ to the collection of smallest $\abs{\A_{i-1}}$ sets (w.r.t.\ $\prec_{\h}$) of $\h\setminus\I_i$, denoted $\h_{i-1}$. By this process, each set in $\h_i$ contains all the sets in $\h_j$ for $i>j$.
		
		We have to show that the process finishes before $\h$ is exhausted, that is, 
		\begin{equation}
			\label{toshow}
			\abs{\bigcup_{i=1}^h \h_i \cup \bigcup_{i=2}^h \I_i}\leq \abs P + (h-1)(3k-5)2^{k-2}.
		\end{equation}
		For this purpose, we show that for each $i\in\{h,h-1,\ldots,2\}$, the number of new sets that are removed at this step, besides $\h_i$: $\abs{\I_i\setminus\left(\h_i\cup\I_{i+1}\right)}$ is at most $(3k-5)2^{k-2}$ (where we consider $\I_{h+1}=\emptyset$). Since $\abs{\bigcup_{i=1}^h \h_i} = \abs{P}$ and there are $h(P)-1$ steps in which sets are removed, we will have our desired inequality \eqref{toshow}. Let $A$ be the largest set (w.r.t.\ $\prec_{\h}$) in $\h_i$, and $m=\abs A$. Every set which is smaller than $A$ (w,r,t $\prec_{\h}$) is either in $\h_i$ or $\I_{i+1}$. If $A=\overbrace{111\ldots1}^{m-k+1}0\overbrace{11\ldots1}^{k-1}\overbrace{000\ldots0}^{n-m-1}$, then $\I_i\setminus\left(\h_i\cup\I_{i+1}\right)$ is a subcollection of all sets in $\C_k^0$ whose size is smaller than $m$, but which are unrelated to at least one set in $\C_k^0$ of size $m$ or more. By Lemma \ref{unrelated}, the number of such sets is $(3k-5)2^{k-2}$. If $A\neq\overbrace{111\ldots1}^{m-k+1}0\overbrace{11\ldots1}^{k-1}\overbrace{000\ldots0}^{n-m-1}$, then, by Proposition~\ref{worstset}, the sets in $\C_k^0$ whose size is smaller than $m$, and which are unrelated to $A$ or some other set in $\h$ which is smaller than $A$ in our order $\prec_{\h}$, are also unrelated to some set in $\C_k^0$ of size $m+1$ or more. Thus the sets in $\I_i\setminus\left(\h_i\cup\I_{i+1}\right)$ are some sets in $\C_k^0$ of size $m$ and some sets whose size is smaller than $m$ but unrelated to at least one set in $\C_k^0$ of size $m+1$ or more. Again, the number of such sets is at most $(3k-5)2^{k-2}$.
	\end{proof}
	Now we are ready to prove our main result, Theorem \ref{mainthm}.
	\begin{proof} [Proof of Theorem \ref{mainthm}]
		Let $\A$ be a $P$-free family over $[n]$. Let $N_{\abs{A}}$ denote the number of sets of size $\abs A$ from the $k$-Interval chain. 
		
		\begin{align*}
			2^{k-1} \abs{\A} &= \sum_{\substack {A \in \A \\  \abs{A} < k \text{ or } \abs{A} > n-k }} 2^{k-1} + \sum_{\substack {A \in \A \\ k \le \abs{A} \le n-k }} 2^{k-1} \\ &\le
			\sum_{\substack {A \in \A \\  \abs{A} < k \text{ or } \abs{A} > n-k }} \frac{N_{\abs{A}} {n \choose \left\lfloor \frac{n}{2}\right\rfloor }}{\binom{n}{\abs{A}}} + \sum_{\substack {A \in \A \\ k \le \abs{A} \le n-k }} \frac{2^{k-1} \cdot {n \choose \left\lfloor \frac{n}{2}\right\rfloor }}{\binom{n}{\abs{A}}}\le \alpha(\C_k,P) {n \choose \left\lfloor \frac{n}{2}\right\rfloor } .
		\end{align*}
		
		If $\abs{A} < k \text{ or } \abs{A} > n-k$, we have $2^{k-1}  \le \frac{{n \choose \left\lfloor \frac{n}{2}\right\rfloor }}{\binom{n}{\abs{A}}}$ when $n$ is sufficiently large and so the first inequality holds. If $k \le \abs{A} \le n-k$, by Lemma \ref{levelsize}, we have $ 2^{k-1} = N_{\abs{A}}$ and so the second inequality holds due to Lemma \ref{bound}. Now we use Lemma \ref{maxsubset} to upper bound $\alpha(\C_k,P)$, from which the theorem follows.
	\end{proof}
	
	We now obtain Corollary \ref{maincor} using the above theorem.
	
	\begin{proof}[First proof of Corollary \ref{maincor}]
		Let $\A$ be a $P$-free family, and let $h$ be the height of $P$. Define $k=\left\lceil \log_{2}\left(\frac{\abs P}{h}\right)\right\rceil =\log_{2}\left(\frac{\abs P}{h}\right)+x=\log_{2}\left(\frac{\abs P y}{h}\right)$. Let us substitute this $k$ into Theorem \ref{mainthm} (where $0\leq x <1$ and $1\leq y <2$). 
		If $k\geq 2$, we get
		\begin{multline*}
			\frac{\abs{\A}}{{n \choose \left\lfloor \frac{n}{2}\right\rfloor }}
			\leq  \frac{1}{2^{k-1}} \left(\abs P + (h-1)(3k-5)2^{k-2} - 1 \right)
			<\frac{3\cdot2^{k-2}kh+\abs P}{2^{k-1}}=\\
			=\frac{\frac{3}{4}y\abs P\left(\log_{2}\left(\frac{\abs P}{h}\right)+x\right)+\abs P}{\frac{y\abs P}{2h}}<\frac{3}{2}\log_{2}\left(\frac{\abs P}{h}\right)h+3.5h.
		\end{multline*}
		If $k\leq 1$, we have $\abs P \leq 2h$. Double counting with just the chain gives a bound of $(\abs {P}-1) {n \choose \left\lfloor \frac{n}{2}\right\rfloor }$ (see Erd\H{o}s \cite{erdos1945lemma}), so the corollary still holds.
		So we have,
		\begin{displaymath}
			\La(n,P) < \left( \frac{3}{2}\log_{2}\left(\frac{\abs P}{h}\right)h+3.5h \right) {n \choose \left\lfloor \frac{n}{2}\right\rfloor }. \qedhere
		\end{displaymath}
	\end{proof}
	
	\section{A different proof of Corollary \ref{maincor} using generalized diamonds}
	We begin by recalling some results from the papers of Griggs and Li \cite{griggs2012poset} and Griggs, Li and Lu \cite{griggs2012diamond}.
	\begin{definition}[Product of posets]
		If a poset $P$ has a unique maximal element and a poset $Q$ has a unique minimal element, then their product $P \otimes Q$ is defined as the poset formed by identifying the maximal element of $P$ with the minimal element of $Q$.
	\end{definition}

	\begin{lemma}[Griggs, Li \cite{griggs2012poset}]
		\label{sumofposets}
		$\La(n, P \otimes Q) \le \La(n,P)+ \La(n, Q)$.
	\end{lemma}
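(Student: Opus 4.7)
The plan is to prove this by partitioning a $(P\otimes Q)$-free family $\A\subseteq 2^{[n]}$ into two parts, one of which is $P$-free and the other $Q$-free. Specifically, I would define
\[ \A_P = \{A\in\A : \text{there is an embedding of } P \text{ into } \A \text{ whose image of the maximum of } P \text{ is } A\}, \]
and $\A_Q=\A\setminus\A_P$. Then $|\A|=|\A_P|+|\A_Q|$, so it suffices to show that $\A_Q$ is $P$-free and $\A_P$ is $Q$-free, which would give $|\A_Q|\leq\La(n,P)$ and $|\A_P|\leq\La(n,Q)$.

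The first claim is immediate: if $\A_Q$ contained a subposet copy of $P$, its top element $A$ would by definition lie in $\A_P$, contradicting $\A_P\cap\A_Q=\emptyset$. For the second claim, suppose toward a contradiction that $\A_P$ contains a subposet copy of $Q$ and let $B\in\A_P$ be the image of the minimum element of $Q$. By the definition of $\A_P$, there is an injection $\varphi_P:P\to\A$ respecting order and sending the maximum of $P$ to $B$; and by assumption there is an injection $\varphi_Q:Q\to\A$ respecting order and sending the minimum of $Q$ to $B$. Gluing these along $B$ gives a candidate embedding of $P\otimes Q$ into $\A$.

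The main detail to verify is that this glued map is actually an injection (so it gives a genuine subposet embedding), i.e.\ that the images of the non-identified elements of $P$ and $Q$ are distinct. This follows from the fact that in $P$ the maximum is unique, so for any non-maximum $p\in P$ one has $\varphi_P(p)\subseteq B$ with $\varphi_P(p)\neq B$ by injectivity, hence $\varphi_P(p)\subsetneq B$; symmetrically, for any non-minimum $q\in Q$, $\varphi_Q(q)\supsetneq B$. Thus $\varphi_P(P\setminus\{\max\})$ lies strictly below $B$ while $\varphi_Q(Q\setminus\{\min\})$ lies strictly above $B$ in the subset order, so the two images are disjoint and the combined map is injective. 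This produces a copy of $P\otimes Q$ in $\A$, contradicting $(P\otimes Q)$-freeness, and hence completes the proof.
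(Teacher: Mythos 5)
Your proof is correct and uses essentially the same decomposition as the paper's, just formulated dually: the paper partitions a maximal $P\otimes Q$-free family $\F$ into $\F_1=\{S\in\F : \F\cap[S,[n]]\text{ contains }Q\}$ (shown to be $P$-free) and $\F_2=\F\setminus\F_1$ (which is $Q$-free by definition), the mirror image of your split into sets that serve as the top of a copy of $P$. If anything your version is a touch cleaner, since pinning the gluing set to $B$ sidesteps the implicit step in the paper's argument of sliding the image of the minimum of $Q$ down to coincide with the image of the maximum of $P$.
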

	
	\begin{proof}
		Let $\F$ be a maximal $P \otimes Q$-free family. Define $\F_1 = \{S \in \F \mid \F \cap [S, [n]] \text{ contains } Q \}$ and let $\F_2 = \F \setminus \F_1$. 
		
		We claim that $\F_1$ is $P$-free. Suppose not. Then there is a set $M_1 \in \F_1$ which represents the maximal element of $P$, and, by definition, $\F \cap [M_1, [n]]$ contains $Q$. Also notice that, since $M_1$ represents the maximal element of $P$, there are no elements in $[M_1, [n]] \setminus \{M_1\}$ that are part of the representation of $P$. This implies that $\F$ contains $P \otimes Q$, a contradiction. It is easy to see that $\F_2$ is $Q$-free, for otherwise, the element $M_2$, that represents the minimal element of $Q$ satisfies: $\F \cap [M_2, [n]]$ contains $Q$, contradicting the definition of $\F_2$.  So we have $ \abs{\F} = \La(n, P \otimes Q) = \abs{\F_1} + \abs{\F_2} \le \La(n, P) + \La(n, Q)$, as desired.
	\end{proof}

	We shall write $h$ in place of $h(P)$ for convenience.
	Let $D_k$ be the poset on $k+2$ elements with relations $ b < c_1, c_2, \ldots, c_k < d$. Let $K_{a_1,\ldots,a_h}$ be the complete $h$-level poset where the sizes of levels are $a_1, a_2, \ldots, a_h$: the poset in which every element is smaller than every element on every higher level.
	
	By using a partition method on chains, Griggs, Li and Lu proved 
	
	\begin{theorem}[Griggs, Li, Lu \cite{griggs2012diamond}]
		\label{D_k}
		Let $k \ge 2$. Then,
		$$\La(n, D_k) \le (\log_2(k+2) + 2) \binom{n}{\floor{\frac{n}{2}}}.$$
	\end{theorem}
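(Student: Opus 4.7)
The plan is to prove the stronger Lubell-function bound $\lambda_n(\mathcal{F}) := \sum_{A\in\mathcal{F}} \frac{1}{\binom{n}{\abs{A}}} \le \log_2(k+2)+2$ for every $D_k$-free family $\mathcal{F}\subset 2^{[n]}$; via the LYM inequality $\abs{\mathcal{F}}/\binom{n}{\floor{n/2}}\le \lambda_n(\mathcal{F})$, this immediately yields the theorem. By the standard double counting, $\lambda_n(\mathcal{F})=E[\abs{\mathcal{F}\cap\mathcal{C}}]$ where $\mathcal{C}$ ranges over uniformly random maximal chains of $2^{[n]}$, so the task reduces to bounding this expectation.

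The core of the proof should be a \emph{partition method on chains} that extracts a logarithmic dependence on $k$. The basic structural observation exploiting $D_k$-freeness is: whenever $A\subsetneq B$ with $A,B\in\mathcal{F}$, strictly between them in the Boolean lattice there are at most $k-1$ other sets of $\mathcal{F}$, for otherwise any $k$ of them together with $A$ and $B$ would realize $D_k$. I would use this dyadically, assigning each $A\in\mathcal{F}$ a depth $d(A)$ obtained by recursively bisecting $\mathcal{F}\cap\mathcal{C}$ at its median element, then recursing on each half. By the observation above, the ``effective diamond parameter'' on each half is roughly halved at every level, so after $\lceil\log_2(k+2)\rceil$ bisections the remaining pieces are antichains or chains of bounded length, each contributing $O(1)$ to the Lubell function. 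Summing over the $\log_2(k+2)$ levels plus a constant for the base case should give the stated bound.

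An alternative route would be to recurse via the product lemma (Lemma~\ref{sumofposets}): since $D_k$ embeds as a subposet of $D_{k_1}\otimes D_{k_2}$ whenever $k_1+k_2+1\ge k$ (identifying the top of the first diamond with the bottom of the second and distributing the $c_i$'s), one obtains $\La(n,D_k)\le \La(n,D_{k_1})+\La(n,D_{k_2})$. However, this recursion decreases $k$ by only one per invocation, so it produces a bound linear in $k$ and cannot give the logarithmic factor. The Lubell-style dyadic splitting therefore seems essential.

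The main obstacle I anticipate is carrying out the bisection in a \emph{chain-independent} way: the depth function $d(A)$ must be defined using the global poset structure of $\mathcal{F}$ inside $2^{[n]}$, not through the random chain $\mathcal{C}$, so that Lubell contributions across different chains combine cleanly without double counting. One must also verify that $D_k$-freeness genuinely halves the local diamond parameter at each recursion step---rather than, say, reducing it only additively---and calibrate the base case to absorb the additive $+2$. These are bookkeeping issues, but the overall $\log_2(k+2)$ shape of the bound is essentially forced by the doubling argument.
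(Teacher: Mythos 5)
The paper does not prove Theorem~\ref{D_k}; it is cited from Griggs, Li and Lu, with only the remark that they obtained it ``by using a partition method on chains.'' So there is no in-paper proof to compare your proposal against.

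Judged on its own merits, your proposal has a genuine gap, not a bookkeeping issue. The claim that ``$D_k$-freeness genuinely halves the local diamond parameter at each recursion step'' is unjustified and does not appear to be true in the sense you need: bisecting $\F\cap\C$ at its median halves the \emph{count} of elements in each piece simply because that is what taking a median does, and this is unrelated to the structure of $D_k$. Your observation that any interval $[A,B]$ with $A,B\in\F$ contains at most $k+1$ members of $\F$ already bounds $|\F\cap\C|$ by $k+1$; iterating a median split $\lceil\log_2(k+2)\rceil$ times to reach singletons is then just binary search on a set of size at most $k+1$ and contributes no new structural information about the Lubell mass. You also need, and do not supply, a reason why the medians occurring at a given depth $d$ carry $O(1)$ total Lubell mass: they need not form an antichain, and no global, chain-independent definition of $d(A)$ with this property is proposed (you flag this yourself as a difficulty). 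The logarithm in the Griggs--Li--Lu bound comes from a different mechanism: a height-$m$ interval $[A,B]$ has $\binom{m}{\ell}$ sets at interior level $\ell$, so occupying $L$ interior levels requires on the order of $2^L$ sets, whereas $D_k$-freeness caps $|\F\cap[A,B]|$ at $k+1$; hence members of $\F$ can occupy only about $\log_2 k$ interior levels of $[A,B]$, each contributing at most $1$ to the Lubell mass of $\F$ within that interval. Their chain partition groups maximal chains by the minimum and maximum member of $\F$ they meet and applies this counting to each resulting interval. (The same ``a height-$m$ interval has only $2^m$ sets'' idea surfaces in this paper's proof of Proposition~\ref{lowerbound}.) To salvage your approach you would need to replace the ``halving the diamond parameter'' step with a mechanism that actually produces a logarithm and to justify a per-level Lubell bound; as written the argument does not close.
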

	
	\medskip
	
	By Mirsky's decomposition \cite{mirsky}, $P$ can be viewed as a union of $h$ antichains: $\A_i$, $1 \le i \le h$. Let $\abs{\A_i} = a_i$. Then, it is easy to see that the following lemma holds.

	\begin{lemma}
		\label{embedding}
		$P$ is a subposet of $K_{a_1,\ldots,a_h}$, which in turn, is a subposet of \\ $D_{a_1} \otimes D_{a_2} \otimes \ldots \otimes D_{h-1} \otimes D_{a_h}$.
	\end{lemma}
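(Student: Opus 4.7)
The plan is to handle the two embeddings separately, both of which are essentially structural. For the first embedding $P \hookrightarrow K_{a_1,\ldots,a_h}$, I would use a Mirsky decomposition $P = \A_1 \cup \cdots \cup \A_h$ with $|\A_i| = a_i$, chosen (as is always possible --- e.g.\ put each element in the antichain indexed by the length of the longest chain ending at it) so that whenever $x < y$ in $P$ with $x \in \A_i$ and $y \in \A_j$, we have $i < j$. Now send $\A_i$ bijectively to the $i$th level of $K_{a_1,\ldots,a_h}$. Since every level-$i$ element of $K_{a_1,\ldots,a_h}$ lies below every level-$j$ element whenever $i < j$, any relation $x \le y$ in $P$ is preserved, exhibiting $P$ as a weak subposet of $K_{a_1,\ldots,a_h}$.

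For the second embedding $K_{a_1,\ldots,a_h} \hookrightarrow D_{a_1} \otimes \cdots \otimes D_{a_h}$, I would write out the product explicitly. Denote the elements of the $i$th factor by $b_i, c_{i,1}, \ldots, c_{i,a_i}, d_i$, with the only non-trivial relations being $b_i < c_{i,\ell} < d_i$. The product identifies $d_i$ with $b_{i+1}$ for $1 \le i < h$, so the resulting poset has a ``backbone'' $b_1 < d_1 = b_2 < d_2 = b_3 < \cdots < d_h$, and in each slot a set of $a_i$ pairwise incomparable ``middle'' elements $c_{i,1}, \ldots, c_{i,a_i}$ wedged between $b_i$ and $d_i$. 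I would then map the $i$th level of $K_{a_1,\ldots,a_h}$ bijectively onto the middle elements of the $i$th slot. Middle elements within the same slot stay pairwise incomparable, matching the antichain structure of a level of $K$. For elements on different levels $i < j$, the product furnishes the chain $c_{i,\ell} < d_i \le d_{j-1} < c_{j,\ell'}$, supplying the required comparability.

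No step looks hard. The only subtlety worth double-checking is that Mirsky's decomposition can indeed be chosen to route comparabilities upward, which is immediate from the chain-length ranking. The second inclusion is essentially tautological given the definition of the product operation, since stacking the diamonds inserts precisely the inter-level comparabilities that $K_{a_1,\ldots,a_h}$ demands and no unwanted intra-level ones. So the whole proof reduces to writing down the two explicit injections and verifying that relations are preserved in each direction.
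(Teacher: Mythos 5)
Your proof is correct and fills in exactly the argument the paper leaves implicit (the paper sets up the Mirsky decomposition just before the lemma and then dismisses the rest with ``it is easy to see''). The Mirsky antichain-to-level map and the level-to-middle-elements map into the stacked diamonds, with comparabilities between levels routed through the backbone $b_1 < d_1 = b_2 < \cdots < d_h$, is precisely the intended argument; you also silently and correctly read the paper's typo $D_{h-1}$ as $D_{a_{h-1}}$.
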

	
	Now we are ready to prove Corollary \ref{maincor} with better constants.
	
	%
	
	\begin{proof}[Second proof of Corollary \ref{maincor}]
		By Lemma \ref{embedding}, we have
		\[\La(n, P) \le \La(n, K_{a_1,\ldots,a_h}) \le \La(n, D_{a_1} \otimes D_{a_2} \otimes \ldots \otimes D_{a_{h-1}} \otimes D_{a_h}). \]
		By Lemma \ref{sumofposets} and Theorem \ref{D_k}, we have
		\[\La(n, D_{a_1} \otimes D_{a_2} \otimes \ldots \otimes D_{a_{h-1}} \otimes D_{a_h}) \le  \sum_{i=1}^{h} (\log_2(a_i+2) + 2) \binom{n}{\floor{\frac{n}{2}}}. \]
		Bounding the sum on the right-hand side, by Jensen's inequality we have
		\[
		\sum_{i=1}^{h} (\log_2(a_i+2) +2) \le h \cdot \log_2 \left( \frac{\abs{P}}{h} + 2 \right) + 2h.
		\]
		This implies our desired result
		\[ \La(n,P) \leq  \left( h \cdot \log_2 \left( \frac{\abs{P}}{h} + 2 \right) + 2 h \right) {n \choose \left\lfloor \frac{n}{2}\right\rfloor }. \qedhere \]
	\end{proof}
	
	Finally, we will prove Proposition \ref{lowerbound}, a matching lower bound for Corollary \ref{maincor}.
	
	\begin{proof} [Proof of Proposition \ref{lowerbound}]
		We show that the height of any poset corresponding to a family of sets which realizes $K_{a,a,\ldots,a}$ is at least $(h-2) \log_2 a +1$.
		This implies that if $\A$ is the middle $(h-2)\log_2 a$ levels of $2^{[n]}$, it does not contain $P$ as a subposet.
		
		Let us denote the levels of $P=K_{a,a,\ldots,a}$ by $\P_1,\P_2,\ldots,\P_h$, and let $\mathcal{H}$ be a set family into which $P$ is embedded. For every $1\leq i\leq h-1$, let $U_i$ be the union of the sets corresponding to the elements of $\P_i$ by the embedding. Then, the structure of $P$ implies that every element of $\P_{i+1}$ is mapped to sets containing $U_i$. If $\abs{U_{i+1} \setminus U_i}=k$, there are $2^k$ sets in total containing $U_i$ and contained in $U_{i+1}$. Thus, we have $\abs{U_{i+1}}-\abs{U_{i}} \geq \log_2 a$ (this idea comes from Theorem 2.5 in \cite{griggs2012diamond}). So $\abs{U_{h-1}}-\abs{U_{1}} \geq (h-2)\log_2 a$. $\P_1$ is mapped to sets of size at most $\abs{U_1}$, and $\P_h$ is mapped to sets of size at least $\abs{U_{h-1}}$, so the set family spans at least $(h-2)\log_2 a +1$ levels.
	\end{proof}
	
	\section{Proof of Theorem \ref{induced}}
	In this section we will give an upper bound on the size of the Lubell function of an induced $P$-free family.  Lemma \ref{bound} holds for induced posets as well by an identical proof. Let $0\leq a \leq b \leq n$. Let $\h\subset 2^{[n]}$ be a collection of sets which has the same number of sets, $N$, for each cardinality $i$ for $a \leq i \leq b$.  Define $\alpha^\#(\h,P)$ to be the size of the largest subcollection of $\h$ containing no induced $P$.
	\begin{lemma}\label{intervalbound}
		Let $\A$ be an induced $P$-free family in $2^{[n]}$, in which the cardinality of every set is between $a$ and $b$. We have
		\[l_n(\A) \leq \frac{\alpha^\#(\h,P)}{N}.\]
		In particular, if $\C_k$ is an interval chain as defined in the Section \ref{intchainsec}, and $k\leq a$ and $b\leq n-k$ hold, we have
		\[l_n(\A) \leq \frac{\alpha^\#(\{A\in\C_k : a\leq \abs A \leq b\},P)}{2^{k-1}}.\]
	\end{lemma}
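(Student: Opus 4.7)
The plan is to lift the permutation-averaging argument of Lemma \ref{bound} to the induced setting, using only two observations beyond that proof: a sub-collection of an induced $P$-free family is again induced $P$-free, and a relabeling of $[n]$ is an order-isomorphism of $2^{[n]}$ that preserves induced containment. With these in hand, the same double-count of pairs $(A, \pi) \in \A \times S_n$ with $A \in \h^\pi$ goes through essentially verbatim.

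For each fixed $A \in \A$, Lemma \ref{l1} contributes $N_{\abs A} \abs A!(n-\abs A)!$ such permutations, and the hypothesis $a \le \abs A \le b$ lets me replace $N_{\abs A}$ by the uniform value $N$. For each fixed $\pi \in S_n$, the intersection $\A \cap \h^\pi$ is induced $P$-free (by the first observation above) and is a sub-collection of $\h^\pi$, so $\abs{\A \cap \h^\pi} \le \alpha^\#(\h^\pi, P) = \alpha^\#(\h, P)$ (by the second observation). Summing the first count over $A \in \A$ and bounding by the second count over $\pi$, then dividing by $n!$, produces $N \cdot l_n(\A) \le \alpha^\#(\h, P)$, which rearranges to the first claim.

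The second claim then follows by instantiating the first with $\h = \{A \in \C_k : a \le \abs A \le b\}$: by Lemma \ref{levelsize}, under the hypotheses $k \le a$ and $b \le n-k$ every size in $[a,b]$ is attained by exactly $2^{k-1}$ sets of $\C_k$, so $N = 2^{k-1}$. The only subtlety is verifying that restriction to a sub-collection and relabeling of the ground set both respect induced containment, but both are immediate from the definition of an induced subposet, so I do not foresee any genuine obstacle here --- the lemma is essentially a packaging of the induced variant of Lemma \ref{bound} tailored to the setting where $\A$'s cardinalities are confined to $[a,b]$.
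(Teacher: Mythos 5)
Your proof is correct and follows exactly the route the paper intends: the paper's proof is just the one-liner ``The proof of Lemma \ref{bound} applies, observing that $a \leq \abs A \leq b$,'' and you have simply spelled out the same permutation double-count, noting that induced $P$-freeness passes to sub-collections and is permutation-invariant, and then specialized to the interval chain via Lemma \ref{levelsize}.
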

	\begin{proof}
		The proof of Lemma \ref{bound} applies, observing that $a \leq \abs A \leq b$.
	\end{proof}
	
	We prove the following statement, which is slightly stronger than Theorem \ref{induced}.
	\begin{lemma}\label{inducedlemma}
		Let $P$ be a poset and let $c>\frac 1 2$. Let $n$ be a natural number, and let $0\leq a \leq b \leq n$. If $\A$ is an induced $P$-free family in which the cardinality of every set is between $a$ and $b$,
		\[l_n(\A)=\Ordo\left((b-a)^c\right).\]
	\end{lemma}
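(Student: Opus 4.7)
The plan is to apply Lemma \ref{intervalbound} with a carefully chosen $k$ and to bound the resulting $\alpha^\#$ via a double-counting argument that invokes Theorem \ref{inducedLa} inside each interval $[A_i,A_{i+k}]$ of $\C_k^0=\bigcup_{i=0}^{n-k}[A_i,A_{i+k}]$.

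First assume $k\le a$ and $b\le n-k$, so Lemma \ref{intervalbound} yields
\[l_n(\A)\le\frac{\alpha^\#(\{A\in\C_k^0: a\le\abs A\le b\},P)}{2^{k-1}}.\]
Since each interval $[A_i,A_{i+k}]$ is isomorphic to the Boolean lattice $2^{[k]}$, Theorem \ref{inducedLa} bounds the intersection of any induced $P$-free subfamily with such an interval by $C_P\binom{k}{\floor{k/2}}$, where $C_P$ depends only on $P$. Only the at most $b-a+k+1$ intervals with $i\in[a-k,b]$ meet levels $[a,b]$, and every set in $\C_k^0\cap\{a\le\abs A\le b\}$ lies in at least one of them, so double counting gives
\[\alpha^\#(\{A\in\C_k^0: a\le\abs A\le b\},P)\le(b-a+k+1)\,C_P\binom{k}{\floor{k/2}}.\]
Combined with $\binom{k}{\floor{k/2}}/2^{k-1}=\Ordo(1/\sqrt k)$, this yields $l_n(\A)=\Ordo((b-a+k)/\sqrt k)$, which is minimized up to a constant factor by choosing $k$ of order $b-a$, producing $\Ordo(\sqrt{b-a})$---a bound stronger than the required $\Ordo((b-a)^c)$ for any $c>\frac12$.

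The main obstacle is the restriction $k\le\min(a,n-b)$ in Lemma \ref{intervalbound}, which rules out $k\sim b-a$ when $b-a$ exceeds $\min(a,n-b)$. The family of complements $\{[n]\setminus A:A\in\A\}$ is induced $P^*$-free (with $P^*$ the dual of $P$), has the same Lubell function as $\A$, and has cardinalities in $[n-b,n-a]$, so we may assume $a\le n-b$, reducing the restriction to $k\le a$. When $b-a>a$, we partition $\A$ by cardinality into dyadic sub-ranges $[a_j,a_{j+1})$ with $a_{j+1}\le 2a_j$ (an initial segment of constantly many levels is handled by the trivial bound that each level contributes at most $1$ to the Lubell function). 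Each sub-range satisfies $a_{j+1}-a_j\le a_j$, so the argument above applies to it with $k_j=a_{j+1}-a_j$, contributing $\Ordo(\sqrt{a_{j+1}-a_j})$. Summing over the geometrically increasing sub-ranges yields $\Ordo(\sqrt{b})=\Ordo(\sqrt{b-a})$ (since $a\le b-a$ in this branch), as required.
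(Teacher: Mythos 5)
Your core double-counting step is the same as the paper's: cover $\{A\in\C_k^0 : a\le \abs A \le b\}$ by the intervals $[A_i,A_{i+k}]$, bound $\A$ inside each by Theorem~\ref{inducedLa}, and divide by $2^{k-1}$ via Lemma~\ref{intervalbound}. You try to take $k$ on the order of $b-a$ in one shot to get $\Ordo(\sqrt{b-a})$, and you correctly identify the constraint $k\le\min(a,n-b)$ as the obstacle. But your fix has a gap. After complementing to assume $a\le n-b$, you write that this ``reduces the restriction to $k\le a$''; that is only true for the original range $[a,b]$. Once you pass to a dyadic sub-range $[a_j,b_j]$ with $k_j\approx a_j$, the constraint from Lemma~\ref{intervalbound} is $k_j\le \min(a_j,\,n-b_j)$, and the term $n-b_j$ need only be at least $a$ (it does \emph{not} grow with $a_j$). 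Concretely, with $n=100$, $a=1$, $b=99$ (so $a\le n-b$ holds), your decomposition produces a block such as $[64,99]$ with $k_j\approx 35$, while $n-b_j=1$; the hypothesis $b_j\le n-k_j$ of Lemma~\ref{intervalbound} fails badly. So the inequality you invoke for that block is simply unavailable. Your geometric decomposition only protects the lower endpoint, not the upper one, and a symmetric (two-sided) decomposition from both $a$ upward and $n-b$ upward would be needed to make the argument go through.

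The paper avoids this entirely and more economically: it never tries to take $k$ comparable to $m=b-a+1$. Instead it chooses $k=m^{2/(2c'+1)}$ (sublinear in $m$), applies the double count only to the middle levels $a+k,\ldots,b-k$ --- for which the hypotheses $k\le a+k$ and $b-k\le n-k$ hold trivially with no assumption on $a,b$ at all --- and then handles the two fringes of width $k$ by the induction hypothesis at exponent $c'$. Iterating from the trivial $c=1$ yields $c_i=2^i/(2^{i+1}-1)\to \tfrac12$. The cost is that the paper only reaches $\Ordo((b-a)^c)$ for $c>\tfrac12$ rather than $\Ordo(\sqrt{b-a})$, but that is all the lemma claims, and the recursion sidesteps every boundary issue. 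If you want the $\Ordo(\sqrt{b-a})$ strengthening you aimed for, you would need the two-sided dyadic decomposition sketched above, not the one-sided one you wrote.
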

	
	The following claim will be used recursively and is key to the proof of our lemma.
	
	\begin{claim}
		\label{recursion}
		If Lemma \ref{inducedlemma} holds for a given $c=c'>\frac 1 2$, then it also holds for $c=\frac{2c'}{2c'+1}$.
	\end{claim}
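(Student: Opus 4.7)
The plan is to combine the inductive hypothesis (for exponent $c'$) with Lemma~\ref{intervalbound} through a two-scale partition of the cardinality range. Let $\A\subseteq 2^{[n]}$ be an induced $P$-free family with cardinalities in $[a,b]$, and write $L=b-a$. I will fix two parameters, a block width $s$ and an interval-chain width $k$, to be optimized at the end, and partition $[a,b]$ into $M=\lceil L/s\rceil$ consecutive sub-intervals of width at most $s$; let $\A_i$ denote the restriction of $\A$ to the $i$-th sub-interval.

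For each $\A_i$, applying Lemma~\ref{intervalbound} with a $k$-interval chain (requiring $k\le a$ and $b\le n-k$) gives
\[
 l_n(\A_i)\;\le\;\frac{\alpha^\#(\{A\in\C_k : a_i\le|A|\le a_i+s\},\,P)}{2^{k-1}}.
\]
The maximizing induced $P$-free subfamily $\B_i$ of this restricted interval chain is itself induced $P$-free in $2^{[n]}$ with cardinalities in a window of width $\le s$, so the inductive hypothesis for $c'$ yields $l_n(\B_i)\le Cs^{c'}$. From this I extract two bounds on $|\B_i|=\alpha^\#(\,\cdot\,,P)$: the structural bound $|\B_i|\le 2^{k-1}(s+1)$ (using Lemma~\ref{levelsize}, since $\C_k$ has $2^{k-1}$ sets per level), and the Lubell-to-cardinality bound $|\B_i|\le Cs^{c'}\cdot\max_m\binom{n}{m}$. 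I then take the minimum of these two, substitute into Lemma~\ref{intervalbound}, sum over $i\in[M]$, and optimize. The expected scaling is $s\sim L^{1/(2c'+1)}$, so that $M\sim L^{2c'/(2c'+1)}$ matches the target exponent; $k$ is then chosen so that $2^{k-1}$ is comparable to the binomial factor in the Lubell-to-cardinality conversion, making the two bounds on $|\B_i|$ of the same order and producing $l_n(\A)=O(L^{2c'/(2c'+1)})$.

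The main obstacle will be the Lubell-to-cardinality conversion for $\B_i$: the naive conversion loses a factor $\max_m\binom{n}{m}$, and cancelling it against the $2^{k-1}$ from Lemma~\ref{intervalbound} forces $k$ to be large. I therefore need to check that the required $k$ still satisfies the constraints $k\le a$ and $b\le n-k$ demanded by Lemma~\ref{intervalbound}. When $[a,b]$ is close to $n/2$ this is tight but manageable; when $[a,b]$ lies in a tail region of $[0,n]$, the binomial factor $\max_m\binom{n}{m}$ is already much smaller than $\binom{n}{\lfloor n/2\rfloor}$, which makes the cancellation easier. Threading these two regimes through the parameter optimization uniformly in $n$ is the delicate technical step.
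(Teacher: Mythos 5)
Your approach has a genuine gap, and it is the very obstacle you flag at the end. The conversion $\abs{\B_i} \le l_n(\B_i)\cdot\max_m\binom{n}{m}$ is not a technicality to thread through the parameter optimization; it is fatal. To cancel the $\max_m\binom{n}{m}\approx\binom{n}{\lfloor n/2\rfloor}$ factor against the $2^{k-1}$ from Lemma~\ref{intervalbound}, you would need $k$ close to $n$, but the constraints $k\le a$ and $k\le n-b$ cap $k$ well below that, so the second bound on $\abs{\B_i}$ is exponentially worse than the trivial structural bound whenever $[a,b]$ is anywhere near $n/2$. Worse, if instead you fall back on summing the inductive hypothesis over blocks, the arithmetic is circular: $M\approx L/s$ blocks each with $l_n(\A_i)=O(s^{c'})$ give $O\left(L s^{c'-1}\right)$, which for $c'<1$ is minimized at $s=L$ and just reproduces $O(L^{c'})$ with no improvement in the exponent. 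You need an input that is strictly stronger than the inductive hypothesis at some scale, and your plan does not supply one.

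The ingredient the paper uses, and which your proposal never invokes, is Theorem~\ref{inducedLa} (Methuku--P\'alv\"olgyi) applied \emph{inside each interval} $[A_i, A_{i+k}]$, viewed as an isomorphic copy of the Boolean lattice $2^{[k]}$. This gives a direct cardinality bound $\abs{\A\cap[A_i,A_{i+k}]}\le C\binom{k}{\lfloor k/2\rfloor}$ that depends on $k$, not $n$, completely avoiding the $\binom{n}{\lfloor n/2\rfloor}$ conversion loss. Dividing by $2^{k-1}$ then yields $O(1/\sqrt{k})$ per interval, and since about $m=b-a+1$ intervals meet the middle range $a+k\le\abs{A}\le b-k$, the middle contributes $O(m/\sqrt{k})$; choosing $k=m^{2/(2c'+1)}$ makes this $O\left(m^{2c'/(2c'+1)}\right)$. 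The inductive hypothesis is then used only for the two tails of width $k$, contributing $O(k^{c'})=O\left(m^{2c'/(2c'+1)}\right)$ each. So the recursion is driven by the constant bound on $\La^\#$ inside the small cube $2^{[k]}$, not by per-block reuse of the inductive hypothesis; without that ingredient the exponent cannot drop.
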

	
	\begin{proof}[Proof of Claim]
		Let $m=b-a+1$, and let $k=m^{\frac{2}{2c'+1}}$. Let $\h=\{A\in\C_k : a+k\leq \abs A \leq b-k\}$. By definition $\C_k = \bigcup_{i=0}^{n-k} [A_i, A_{i+k}]$ (where $A_0\subset A_1 \subset \ldots \subset A_n$ is an arbitrary maximal chain), and the levels $a+k$ to $b-k$ intersect $m-k$ of the intervals $[A_i, A_{i+k}]$. By substituting $k$ in the place of $n$ in Theorem \ref{inducedLa}, there is a constant $C$ such that $\abs{\A\cap[A_i, A_{i+k}]}\leq C\binom{k}{\left\lfloor\frac k 2\right\rfloor}$ for every $i$. Thus $\alpha^\#(\h,P)\leq (m-k)C\binom{k}{\left\lfloor\frac k 2\right\rfloor}<Cm\binom{k}{\left\lfloor\frac k 2\right\rfloor}$. By Lemma \ref{intervalbound},
		
		\begin{equation}
			\label{A}
			l_n(\{A\in\C_k : a+k\leq \abs A \leq b-k\})\leq Cm\frac{\binom{k}{\left\lfloor\frac k 2\right\rfloor}}{2^{k-1}}\leq \frac{2\sqrt{2}}{\sqrt{\pi}}C\frac{m}{\sqrt k}=\frac{2\sqrt{2}}{\sqrt{\pi}}C\frac{m}{\sqrt{m^{\frac{2}{2c'+1}}}}=\frac{2\sqrt{2}}{\sqrt{\pi}}Cm^{\frac{2c'}{2c'+1}}.
		\end{equation}
		By our assumption, using Lemma \ref{inducedlemma} with substituting $a+k-1$ in the place of $b$, we have
		\begin{equation}
			\label{B}
			l_n(\{A\in\C_k : a\leq \abs A \leq a+k-1\})=\Ordo\left(k^{c'}\right)=\Ordo\left(m^{\frac{2c'}{2c'+1}}\right).
		\end{equation}
		Similarly, by substituting $b-k+1$ in the place of $a$, we have
		\begin{equation}
			\label{C}
			l_n(\{A\in\C_k : b-k+1\leq \abs A \leq b\})=\Ordo\left(m^{\frac{2c'}{2c'+1}}\right).
		\end{equation}
		Adding up the inequalities \eqref{A}, \eqref{B} and \eqref{C}, we get 
		
		\[l_n(\{A\in\C_k : a\leq \abs A \leq b\})=\frac{2\sqrt{2}}{\sqrt{\pi}}Cm^{\frac{2c'}{2c'+1}}+2\Ordo\left(m^{\frac{2c'}{2c'+1}}\right)=\Ordo\left((b-a)^{\frac{2c'}{2c'+1}}\right). \qedhere\]
	\end{proof}
	
	\begin{proof}[Proof of Lemma \ref{inducedlemma}]
		The lemma is trivial for $c=1$. Substituting $c=1$ in the proof of the claim directly gives a proof for $c=\frac 2 3$. Then, applying the claim recursively proves the statement for a sequence of exponents $c=c_i=\frac{2^i}{2^{i+1}-1}$. Indeed, \[\frac{2c_i}{2c_i+1}=\frac{2\frac{2^i}{2^{i+1}-1}}{2\frac{2^i}{2^{i+1}-1}+1}=\frac{2^{i+1}}{2^{i+2}-1}=c_{i+1}.\]
		The limit of the sequence is $\frac 1 2$, so it eventually becomes smaller than any $c>\frac 1 2$, proving our lemma.
	\end{proof}
	
	\section{Acknowledgements}
	We would like to thank Wei-Tian Li for many helpful discussions during the SUM(M)IT 240 conference in Budapest.  We also wish to thank the anonymous referees and D\"om\"ot\"or P\'alv\"olgyi for their careful reading of our manuscript and helpful remarks.
	
	\bibliography{bibliography1}  

\end{document}